\newcommand{\assign}{:=}
\newcommand{\mathd}{\mathrm{d}}
\newcommand{\nocomma}{}
\newcommand{\tmop}[1]{\ensuremath{\operatorname{#1}}}
\newtheorem{theorem}{Theorem}[section]
\newtheorem{prop}[theorem]{Proposition}
\newtheorem{lm}[theorem]{Lemma}
\theoremstyle{definition}
\newtheorem{definition}[theorem]{Definition}
\newtheorem*{definition*}{Definition}
\theoremstyle{remark}
\newtheorem{rem}[theorem]{Remark}
\newtheorem*{rem*}{Remark}
\numberwithin{equation}{section}
\newcommand{\1}{\mathbf 1}
\newcommand{\ci}[1]{_{ {}_{\scriptstyle #1}}}
\newcommand{\ti}[1]{_{\scriptstyle \text{\rm #1}}}
\newcommand{\cD}{\mathcal{D}}
\newcommand{\cX}{\mathcal{X}}
\newcommand{\cF}{\mathcal{F}}
\newcommand{\cS}{\mathcal{S}}
\newcommand{\R}{\mathbb{R}}
\newcommand{\E}{\mathbb{E}}
\newcommand{\Z}{\mathbb{Z}}
\newcommand{\dd}{\mathrm{d}}
\newcommand{\fdot}{\,\cdot\,}
\newcommand{\sd}{{\scriptstyle\Delta}}
\newcommand{\rk}{\operatorname{rk}}
\newcommand{\Ran}{\operatorname{Ran}}
\newcommand{\ch}{\operatorname{ch}}
\renewcommand{\labelenumi}{(\roman{enumi})}
\newcounter{vremennyj}
\newcommand\cond[1]{\setcounter{vremennyj}{\theenumi}\setcounter{enumi}{#1}\labelenumi\setcounter{enumi}{\thevremennyj}}
\begin{document}

\title[Lower square function estimates]{On the failure of lower square function estimates in the non-homogeneous
weighted setting}

\author{K.~Domelevo}

\author{P.~Ivanisvili}

\author{S.~Petermichl}

\author{S.~Treil}
\address{S.~Treil: Department of Mathematics \\ Brown University \\ Providence, RI 02912 \\ USA}
\email{treil@math.brown.edu}

\author{A.~Volberg}
\address{A.~Volberg: Department of Mathematics \\ Michigan State University \\ East Lansing, MI, 48824 \\ USA}
\email{volberg@math.msu.edu}

\thanks{ Komla Domelevo and  Stefanie Petermichl  were supported by ERC project CHRiSHarMa no. DLV-682402. Sergei Treil was  supported by the NSF grants DMS-1600139. Alexander Volberg was supported by NSF grant DMS-1600065. The paper was written while the authors were in residence at the Mathematical Sciences Research Institute in Berkeley, California, during the Spring semester of 2017, 
 supported by the National Science Foundation under Grant No. 1440140. }

\begin{abstract}
  We show that the classical $A_{\infty}$ condition is not sufficient for a
  lower square function estimate in the non-homogeneous weighted $L^2$ space.
  We also show that under the martingale $A_2$ condition, an estimate holds
  true, but the optimal power of the characteristic jumps from $1 / 2$ to $1$
  even when considering the classical $A_2$ characteristic. This is in a sharp
  contrast to known estimates in the dyadic homogeneous setting as well as the
  recent positive results in this direction on the discrete time
  non-homogeneous martingale transforms. Last, we give a sharp $A_{\infty}$
  estimate for the $n$-adic homogeneous case, growing with $n$. \ 
\end{abstract}

{\maketitle}

\setcounter{tocdepth}{1}
\tableofcontents

\section{Introduction}

\

It is a classical result that the Haar system on the real line is an unconditional basis in the weighted space $ L^2(w)= L^2 (\R,w)$
if and only if the weight $w$ satisfies the dyadic Muckenhoupt $A_2$ condition. 
This is
equivalent to boundedness of the predictable $\pm 1$ multiplier on the
martingale difference sequences with underlying homogeneous dyadic filtration.
This generalizes to martingale difference spaces in homogeneous %$n$-adic
filtrations. These results
were proved in \cite{GunWhe}, where also Littlewood--Paley estimates were considered. It
has been known for some time that the optimal unconditional basis constants
are the first power of the $A_2$ characteristic of the weight. Through
averaging, it follows that the square function has no worse upper bounds, so, again, at most the first power of the $A_2$ characteristic of the weight. 

Concerning the lower estimate of the square function,
it is known that the square function for the standard dyadic filtration on $\R$ satisfies better lower estimates, namely,  with a
square root on the characteristic instead of linear --- the upper and lower estimates  estimates are both
optimal for the homogeneous filtration, see \cite{HuTrVo_SquareFn_2000}, \cite{PetPott_2000}.  In fact, even the weaker $A_{\infty}$ characteristic
is sufficient for this lower estimate (for the standard dyadic filtration on $\R$), also with square root bounds; this was proved in \cite{Wilson1989} using the earlier results from  \cite{ChWiWo1985}.

It was a general understanding that in the homogeneous case one should have the same lower bounds as in the case of the standard dyadic filtration on $\R$, but surprisingly, it was not proven before for our ``real'' square function. The result from \cite{Wilson1989} gives the desired estimates for a bigger square function, but the statement for our ``real'' square function (which is the only one that works in the non-homogeneous case) for a homogeneous filtration is proved (to the best of our knowledge) only in the present paper.

% --- though a somewhat larger operator than the classical martingale
%square function was considered, thus making a lower estimate less difficult to
%obtain. While in the dyadic homogeneous case, these operators are similar,
%there is a notable difference in high dimensions or the non-homogeneous
%setting.

The  sharp results on the  estimates of unconditional basis constants for arbitrary underlying
Radon measure and any discrete in time atomic filtration was proved more
recently in \cite{ThTrVo-A2} and then later in \cite{La2017-SimpleA2} by a different method. The constants remain in a linear
dependence with the martingale $A_2$ characteristic, exactly as in the homogeneous situation.

\

In this paper, we discuss the upper and lower estimates of the square
function in this (arbitrary filtration) setting. It is remarkable that the better lower estimates
seen in the homogeneous setting fail --- indeed the $A_{\infty}$ bound does not
hold true at all --- in other words, the $A_{\infty}$ condition is not
sufficient for a lower square function bound. This is even so when using the
most restrictive way of defining $A_{\infty}$. Under the martingale $A_2$
condition, we obtain a lower estimate, but we will see that it is twice the
power of that in the homogeneous case. The failure of the lower estimates
motivate us to look closely at the $n$-adic homogeneous case --- one expects a
growth with $n$. Indeed, we show that the lower square function estimate in
this setting still holds under the $A_{\infty}$ assumption, but with a growth
$O (n)$.

\

To see the blow ups we claim, we construct weights, in $A_2$ or $A_{\infty}$
respectively, via their martingales based on a filtration where each interval
has at most two children, but of possibly very disbalanced measures.

To see the $A_{\infty}$ lower estimate via the true square function in the
$n$-adic setting, we make use of a Bellman functional taking a distribution
function as its variable. This idea stems from \cite{TV-Entropy2016} --- but here is an additional
difficulty, similar to that of estimating Haar shifts with Bellman functions.

\section{Setup and motivations}

\subsection{Filtered atomic spaces}

Let $(\cX, \cF, \nu)$  be a $\sigma$-finite measure space with an atomic filtration, meaning that there exist an increasing sequence of $\sigma$-algebras $\cF_n$, $n\in\mathbb N$ or $n\in\Z$, such that for each $n$ there exists a countable collection $\cD_n$ of sets of finite positive measure (called \emph{atoms}) such that $A \in \mathcal{F}_n$ is a union of atoms of $\mathcal{D}_n$. 

%Consider a measure space $\Omega$ with a $\sigma$-finite measure $\nu$. 
%we think for simplicity of our probability space as the
%interval $\Omega = [0, 1]$ endowed with a Radon measure $\nu$. 
%Our measure
%space has an atomic discrete time filtration $\{ \mathcal{F}_n \}_{n \in
%\mathbbm{N}}$, or $\{ \mathcal{F}_n \}_{n \in
%\mathbbm{Z}}$, that is, for every $n$ there exists a countable
%disjoint collection $\mathcal{D}_n$ of sets of positive measure called \textit{atoms} such that
%every $A \in \mathcal{F}_n$ is a union of atoms of $\mathcal{D}_n$. In other words
%$\mathcal{D}_n$ spans $\mathcal{F}_n$. 
%(One can easily also consider any
%$\sigma$-finite measure space $\Omega$ instead with any discrete time atomic
%filtration, see \cite{ThTrVo-A2} for details).

We will denote $I\in{\mathcal D}_n$ the atoms of ${\mathcal D}_n$, and
denote by ${\mathcal D}$ the collection of all atoms, i.e. $\mathcal{D}= \cup_{n}\cD_n$.
We allow a set $I$ to belong to several generations $\mathcal{D}_n$, so formally
an atom $I\in\mathcal{D}_n$ is a pair $(I,n)$. When there is no confusion, we will omit the ``time" $n$
and write simply $I$ instead of $(I,n)$; otherwise when it is necessary to
refer to the time $n$, we will use the symbol $\tmop{rk} (I)$, such that if $I$ denotes the atom $(I,n)$ then
$\tmop{rk} (I) \assign n$. Also the inclusion $I \subset J$ for atoms should be understood as inclusion
for the sets together with the inequality $\tmop{rk} (I) \geqslant \tmop{rk} (J)$.
However the union (intersection) of atoms will simply denote the union (intersection)
of the corresponding sets regardless of the time component.
For $I\in\mathcal{D}_n$ we denote by $\tmop{ch}(I)$ the set of children of $I$, that is
the atoms of $\mathcal{D}_{n+1}$ that are direct descendants of $I$ :
$\tmop{ch}(I) \assign \{ I'\in\mathcal{D}_{n+1} ; I'\subset I  \}$.

A typical example is the standard dyadic filtration in $\R^d$ with $\nu$ being an arbitrary Radon measure $\nu$; of course, we need to ignore all cubes $Q\in\cD$ with $\nu(Q)=0$.

To avoid nonessential technical details, in this paper we assume that $\nu$ is a probability measure, and the filtration is indexed by $n\in\Z_+$. We also assume that $\cD_0=\{\cX\}$, and each $\cD_n$ is a finite collection (i.e.~that every  atom has finitely many children).  

Since our main results are counterexamples, by providing them in more restrictive settings we get a formally stronger result than in the more general settings. As for the positive estimates, they can be extended to the general case using standard approximation reasoning, so we do not lose anything.

Without loss of generality we can assume that $\cX$ is the unit interval $[0,1]$, the measure $\nu$ is the standard Lebesgue measure, and that the atoms are intervals. 
We assume that the $\sigma$-algebra $\cF$ is generated by $\sigma$-algebras $\cF_n$, so more precisely, $\nu$ is the restriction of the Lebesgue measure on $\cF$. 

%One may think of a typical case, which is $\1\ci E (x) \mathd x$ with $\mathd
%x$ Lebesgue measure and $E$ a measurable subset of $[0, 1]$. Note also that
%$[0, 1]$ endowed with Lebesgue measure and $2^d$-adic filtration models the
%$d$-dimensional dyadic lattice.

Measures of intervals are denoted by $| I |\assign \nu(I)$. 
% recalling that we do not necessarily mean Lebesgue measure. 
For any interval $I \in\mathcal{D}$, we define
\begin{equation}
\label{eq: definition average}
\langle f \rangle\ci I = |I|^{-1} \int_I f \mathd \nu 
\end{equation}
and
\[
\mathbbm{E}\ci I f = \langle f \rangle\ci I \1\ci I .
\]
For any interval $I \in\mathcal{D}$, the martingale difference operator $\Delta\ci I$ is defined by
\[ \Delta\ci I f = \sum_{I' \in \tmop{ch} (I)} \mathbbm{E}\ci {I'} f -\mathbbm{E}\ci I
   f. \]
Notice that the atom $I\in \mathcal{D}_n$ has only one child (i.e. $\tmop{ch}(I)=\{I\}$) if and only if
the corresponding martingale difference operator is trivial (i.e. $ \Delta\ci I = 0$).
%However, it will be important at several occasions to exclude such atoms. For this reason,
%let us define, for $I \in \mathcal{D}$, the ranks $\tmop{rk}_- (I) = \inf_n \{ I \in \mathcal{D}_n
%\}$ and $\tmop{rk}_+ (I) = \sup_n \{ I \in \mathcal{D}_n \}$ (possibly $\tmop{rk}_+ (I) =\infty$).
%For $I\in \mathcal{D}_n$ with $\tmop{rk} (I) =n \leqslant \tmop{rk}_+ (I)$, we have
%$\tmop{ch}(I)=\{I\}$ if and only if $\tmop{rk}_+ (I) > n$, and equivalently
%$I$ has at least two children if and only if $\tmop{rk}_+ (I)  = n$.

With this in mind, setting
\[ \mathbbm{E}_n f = \sum_{I \in \mathcal{D}_n} \mathbbm{E}\ci I f =\mathbbm{E}
   (f | \mathcal{F}_n),\]
	
we define the martingale difference operator $\Delta_n$ for any $n > 0$ as
\[
\Delta_n f =\mathbbm{E}_n f -\mathbbm{E}_{n - 1} f
	=	\sum_{I \in \mathcal{D}: \tmop{rk} (I) = n - 1}   \Delta\ci I f
\]
together with $\Delta_0 f =\mathbbm{E}_0 f =\langle f\rangle\ci\cX \1$. In the sum above
the contributions of the trivial martingale operators is automatically omitted.

For $I \in \mathcal{D}$ denote by $D\ci I$ the
martingale difference space, the image of the operator $\Delta\ci I$, so $D\ci I =
\Delta\ci I L^2$ and similarly $D_n = \Delta_n L^2$.  Note, that the subspaces $D_n$, $n\ge0$ form an orthogonal basis in $L^2 =L^2(\cX, \cF, \nu)$, and the same holds for the family $D\ci I$, $I\in\cD$ together with the subspace $D_0$ (consisting of constants).  

\subsection{Bases of martingale difference spaces and the Muckenhoupt \texorpdfstring{$A_2$}{A<sub>2} condition}
In the setting described above the following statements are equivalent (with equivalent constants in statements \cond3--\cond6) as a
consequence of the general theory of bases, cf. \cite{ThTrVo-A2}.
\begin{enumerate}
  \item The system of subspaces $\{ D\ci I : I \in \mathcal{D} \nocomma, D\ci I \neq
  \{ 0 \} \}\cup \{D_0\}$ is an unconditional basis in $L^2 (w)$.
  
  \item The system of subspaces $\{ D_n : 0 \leqslant n < \infty \nocomma, D_n
  \neq \{ 0 \} \}$ is an unconditional basis in $L^2 (w)$.
  
  \item The predictable martingale multipliers $T_{\sigma}$ $T_{\sigma} f =
  \sum_{I \in \mathcal{D}} \sigma\ci I \Delta\ci I f$, with $\sigma = \{
  \sigma\ci I \}\ci {I \in \mathcal{D}}$, $\sigma\ci I \in \{ 0, 1 \}$ (or equivalently $\sigma\ci I\in\{-1,1\}$),    are uniformly in $\sigma$
  bounded in $L^2 (w)$.
  
  \item The predictable martingale multipliers $T_{\sigma}$ with $\sigma = \{
  \sigma\ci I \}\ci {I \in \mathcal{D}}$, $| \sigma\ci I | \leqslant 1$ are uniformly
  in $\sigma$ bounded in $L^2 (w)$.
  
  \item The martingale multipliers $T_{\tau}$ with $\tau = \{ \tau_n \}_{n \in
  \mathbbm{N}}$, $\tau_n \in \{ 0, 1 \}$ (or, equivalently $\tau_n\in\{-1,1\}$),  
  \[
  T_{\tau} f = \sum_{k \in \mathbbm{N}} \tau_k \Delta_k f
  \] 
  are uniformly in $\tau$ bounded in $L^2(w)$.
  
  \item The martingale multipliers $T_{\tau}$ with $\tau = \{ \tau_n \}_{n \in
  \mathbbm{N}}$, $| \tau_n | \leqslant 1$ are uniformly in $\tau$ bounded in
  $L^2 (w)$.
\end{enumerate}

It has been known for some time that the statements \cond3--\cond6 hold if and only if the weight $w$ satisfies the martingale Muckenhoupt $A_2$ condition, see Definition \ref{d:A_2-dy} below: for the standard dyadic filtration in $\R^N$ we can refer the reader to \cite{GundyWheeden1973}, and for general martingales the result was proved in \cite{Bonami_Mart_A2_1979}. Later it was proved that the constants in the statements \cond4--\cond6 are estimated by the first power of the $A_2$ characteristic (i.e. $\lesssim   [w]\ci {2, \mathcal{D}}$):  for the standard dyadic filtration in $\R$ (and so in $\R^N$)it was proved in \cite{Wittwer2000}; for the general non-homogeneous filtration  it was established in \cite{ThTrVo-A2} and soon after by a different method in \cite{La2017-SimpleA2}.

By taking the average over all $\sigma\ci I\in\{-1,1\}$ one can see that for a weigh satisfying the martingale $A_2$ condition, the quantity $\|Sf\|\ci{L^2(w)}$ is equivalent in the sense of two sided estimates to the norm $\|f\|\ci{L^2(w)}$, see the details in Section \ref{s: trivial estimates}. It can be easily obtained from the estimate $\|T_\sigma\|\ci{L^2(w)\to L^2(w)} \lesssim [w]\ci{2,\cD}$ that   
\begin{align*}
[w]\ci{2,\cD}^{-1} \|f\|\ci{L^2(w)} \lesssim \|Sf\|\ci{L^2(w)} \lesssim [w]\ci{2,\cD} \|f\|\ci{L^2(w)} \qquad \forall f\in L^2(w), 
\end{align*}
see again Section \ref{s: trivial estimates} for details. The upper bound $\|Sf\|\ci{L^2(w)} \lesssim [w]\ci{2,\cD} \|f\|\ci{L^2(w)}$ is known to be sharp, but the lower bound $\|f\|\ci{L^2(w)} \lesssim [w]\ci{2,\cD} \|Sf\|\ci{L^2(w)}$, as we discussed above in the introduction, can be improved in the homogeneous case. The investigation of the lower bound in the non-homogeneous situation was the main motivation for this paper.

\subsection{Different \texorpdfstring{$A_2$}{A2} and \texorpdfstring{$A_{\infty}$}{A<sub>infty} conditions}

Since our underlying filtration can be non-homogeneous, we have to be very careful
about the definitions of the classes of weights we will use, as they are no
longer necessarily comparable. In all definitions we consider integrable $w$.
Also the notation $\langle \cdot \rangle_I$ below denotes the average operator as defined in \eqref{eq: definition average}.

\begin{definition}
\label{d:A_2-dy}
  We say that a weight $w$ satisfies the martingale $A_2$ condition and write $w \in A_{2}^ {\mathcal{D}}$ if
  \[ [w]\ci {2, \mathcal{D}} := \sup_{I \in \mathcal{D}} \langle w \rangle\ci I
     \langle w^{- 1} \rangle\ci I < \infty . \]
\end{definition}

\begin{definition}
  We say that a weight $w$ satisfies the classical $A_2$ condition and write $w \in A_{2}^{\tmop{cl}}$ if
\[ 
  [w]_{2}^{\tmop{cl}} = \sup_{I \subseteq [0, 1]} \langle w \rangle\ci I
     \langle w^{- 1} \rangle\ci I < \infty , 
\]
  where the supremum runs over all intervals $I \subset [0, 1]$.
\end{definition}

\begin{definition}
For an interval $I$ define the \emph{localized} maximal function $M\ci I$, 
\[
M\ci I f(x) :=  \1\ci I (x) \sup_{J \subseteq I:\, x \in J } | \langle
     f \rangle\ci J |,  
\]
where the supremum runs over all intervals $J \subset I$ containing $x$. 
 
For an interval $I\in\cD$ define also the \emph{martingale} localized maximal function $M^\cD\ci I$,  
  \[ M^{\mathcal{D}}_I f (x) = \1\ci I (x) \sup_{J \in\cD(I) :\, x \in J }
    \left| \langle f \rangle\ci J \right| \]

\end{definition}

\begin{definition}
  We say that a weight $w$ satisfies the classical $A_\infty$ condition and write $w \in A_{\infty}^{ \tmop{cl}}$ if
\[ 
[w]_{\infty, \tmop{cl}} = \sup_{I \subseteq [0, 1]} \frac{\langle
     M_I w \rangle\ci I}{\langle w \rangle\ci I} < \infty  .
 \]
where  $M_I f$ is the localized classical maximal function defined above. 

\end{definition}

\begin{definition}
  We say that a weight $w$ satisfies the \emph{semiclassical} $A_\infty$ condition and write  $w \in A_{\infty}^{\tmop{scl}}$ if
\[ 
  [w]_{\infty, \tmop{scl}} = \sup_{I \in \mathcal{D}} \frac{\langle
     M\ci I w \rangle\ci I}{\langle w \rangle\ci I} < \infty , 
\]
  where again $M_I f$ is the classical maximal function localized
  to $I \in \mathcal{D}$.
\end{definition}

\begin{definition}
  We say that $w \in A_{\infty}^{\mathcal{D}}$ if
\[ 
[w]\ci {\infty, \mathcal{D}} = \sup_{I \in \mathcal{D}} \frac{\langle
     M^{\mathcal{D}}_I w \rangle\ci I}{\langle w \rangle\ci I} < \infty , 
\]
 where $M_I^\cD f$ is the martingale maximal function localized
  to $I \in \mathcal{D}$. 
\end{definition}

We need the following well-known fact. 
\begin{prop}
For any atomic filtration
\begin{equation}
[w]\ci {\infty, \mathcal{D}}\le 4 [w]\ci {2, \mathcal{D}}
\label{eq: A2 versus Ainfinity}
\end{equation}
\end{prop}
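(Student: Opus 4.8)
The plan is to dualize through the weight $\sigma\assign w^{-1}$, reducing the estimate to the $L^2$-boundedness of a dyadic maximal function with respect to the ``dual'' measure. We may assume $w\in A_2^\cD$, since otherwise the right-hand side of \eqref{eq: A2 versus Ainfinity} is infinite; taking $I=\cX$ in Definition~\ref{d:A_2-dy} then forces $\langle\sigma\rangle\ci\cX<\infty$, so $\sigma\in L^1(\nu)$ and $\mu\assign\sigma\,\mathd\nu$ is a finite measure. Since $w<\infty$ $\nu$-a.e. we have $\sigma>0$ $\nu$-a.e., so $\mu(J)>0$ for every atom $J$, and averages $\langle f\rangle\ci J^\mu\assign\mu(J)^{-1}\int_J f\,\mathd\mu$ make sense; let $M^{\cD,\mu}_I$ denote the localized dyadic maximal operator relative to $\mu$, i.e. $M^{\cD,\mu}_I f(x)=\1\ci I(x)\sup_{J\in\cD(I):\,x\in J}|\langle f\rangle\ci J^\mu|$.

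First I would record the pointwise comparison
\[
M^{\cD}_I w(x)\ \le\ [w]\ci{2,\cD}\,M^{\cD,\mu}_I w(x),\qquad x\in I,
\]
which is immediate atom-by-atom: for $J\in\cD(I)$ the martingale $A_2$ condition gives $\langle w\rangle\ci J\le[w]\ci{2,\cD}/\langle\sigma\rangle\ci J=[w]\ci{2,\cD}\,\nu(J)/\mu(J)$, and $\nu(J)/\mu(J)=\mu(J)^{-1}\int_J w\sigma\,\mathd\nu=\langle w\rangle\ci J^\mu$; now take the supremum over $J\ni x$. Next I would integrate this against $\nu$ and switch measures using $\mathd\nu=w\,\mathd\mu$:
\[
\int_I M^{\cD}_I w\,\mathd\nu\ \le\ [w]\ci{2,\cD}\int_I M^{\cD,\mu}_I w\,\mathd\nu\ =\ [w]\ci{2,\cD}\int_I\big(M^{\cD,\mu}_I w\big)\,w\,\mathd\mu.
\]
By Cauchy--Schwarz in $L^2(\mu)$ and the $L^2(\mu)$-boundedness of the dyadic maximal function this is $\le[w]\ci{2,\cD}\,\|M^{\cD,\mu}_I w\|\ci{L^2(\mu)}\|w\|\ci{L^2(\mu)}\le C\,[w]\ci{2,\cD}\,\|w\|\ci{L^2(\mu)}^2$, where $C$ is the $L^2(\mu)$ operator norm of $M^{\cD,\mu}_I$ (one may take $C=2$; even the crude self-improvement argument — layer-cake plus the trivial weak-$(1,1)$ bound with constant $1$, giving $A^2\le B^2+2AB$ for $A=\|M^{\cD,\mu}_Iw\|\ci{L^2(\mu)}$, $B=\|w\|\ci{L^2(\mu)}$ — yields $C\le1+\sqrt2$, which is all we need). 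Since $\|w\|\ci{L^2(\mu)}^2=\int_I w^2\sigma\,\mathd\nu=\int_I w\,\mathd\nu=\langle w\rangle\ci I|I|$, we get $\langle M^{\cD}_I w\rangle\ci I\le C[w]\ci{2,\cD}\langle w\rangle\ci I$, and taking the supremum over $I\in\cD$ gives $[w]\ci{\infty,\cD}\le C[w]\ci{2,\cD}\le4[w]\ci{2,\cD}$. (This in fact proves the stronger bound with constant $2$; if one insists on landing exactly at the constant $4$ claimed, replacing the pointwise comparison by a Calderón--Zygmund stopping-time decomposition of $w$ on $I$ — which costs a factor $2$ — combined with the same $L^2(\mu)$ maximal estimate produces precisely $4[w]\ci{2,\cD}$.)

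The steps are all routine except for one genuine ingredient, the $L^2(\mu)$ bound for the dyadic maximal function relative to an arbitrary finite measure; this is classical (its weak-$(1,1)$ bound with constant $1$ is the standard stopping-time construction, and one interpolates or self-improves from there), so I expect no real obstacle. Minor technical points to watch: $\mu$ charges every atom (used above); if the filtration has infinitely many generations one first proves the estimate with $M^{\cD}_I$ and $M^{\cD,\mu}_I$ truncated to ranks $\le N$ and lets $N\to\infty$ by monotone convergence; and $M^{\cD,\mu}_I w\in L^2(\mu)$ must be known before applying Cauchy--Schwarz, which is exactly the content of the maximal bound applied to $w\in L^2(\mu)$.
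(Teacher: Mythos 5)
Your proof is correct, and it even yields the sharper constant $2$ in place of $4$. The pointwise comparison $M^{\cD}_I w \le [w]\ci{2,\cD}\,M^{\cD,\mu}_I w$ with $\mu=w^{-1}\,\mathd\nu$ is verified correctly (using $\langle w\rangle\ci J^\mu=\nu(J)/\mu(J)=1/\langle w^{-1}\rangle\ci J$), the passage $\mathd\nu = w\,\mathd\mu$ is legitimate, Cauchy--Schwarz and the universal $L^2(\mu)$ bound $\|M^{\cD,\mu}_I\|\le 2$ (valid for any finite measure by the stopping-time weak-$(1,1)$ with constant $1$ plus the self-improvement $A^2\le 2\int w\,M^{\cD,\mu}_I w\,\mathd\mu\le 2AB$) close the argument, and the identity $\|w\|^2_{L^2(\mu)}=\int_I w\,\mathd\nu$ gives exactly $\langle M^{\cD}_I w\rangle\ci I\le 2[w]\ci{2,\cD}\langle w\rangle\ci I$. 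You also handle the degenerate cases and the localization to $I$ properly. The only wrinkle is the parenthetical ``$A^2\le B^2+2AB$'' self-improvement, which as written isn't the clean inequality one gets (the layer-cake argument gives $A^2\le 2AB$ directly); this is harmless since the preceding argument already gives $C=2$, and the final remark about a Calder\'on--Zygmund decomposition to ``land on $4$'' is unnecessary.

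The paper itself does not prove the proposition; it cites \cite[Lemma~4.1]{NPetTrV2017} and notes that the proof there, written for the dyadic lattice in $\R^d$, carries over verbatim to any atomic filtration. The referenced argument is, to the best of my knowledge, the same dualization through $\sigma=w^{-1}$ that you use, so your proposal reproduces (and slightly sharpens the constant of) the proof the paper relies on rather than offering a genuinely different route.
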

For a simple (but probably not the first) proof see \cite[Lemma 4.1]{NPetTrV2017}; there it was stated for the standard dyadic filtration on $\R^d$, but the same proof without any changes works for any atomic filtration. 

It is a theorem of \cite{ThTrVo-A2}  and \cite{La2017-SimpleA2} that the $A_{2}^{\mathcal{D}}$ characteristic is
sufficient, indeed that the constants above are bounded by a multiple of
$[w]\ci {2, \mathcal{D}}$. It is well known that the $A_{2}^{\mathcal{D}}$
condition is necessary and that the linear dependence in \eqref{eq: A2 versus Ainfinity} is optimal among all
estimates of the form $\Phi ([w]\ci {2, \mathcal{D}})$, which is already seen in
the case of dyadic filtration with underlying Lebesgue measure.

\

\section{Main results}

%Let us assume throughout $\langle h \rangle\ci {[0, 1]} = 0$. 
For $f\in L^1(\cX)$ the martingale
square function is defined by 
\[ S h := \left( \sum_I (\Delta\ci I h)^2 \right)^{1
   / 2} . \]
There are variations in the literature that are not equivalent when the
measures are non-homogeneous. Ours is the most natural definition from probability
theory, and the only one that works in the non-homogeneous case. For example, for our square the quantity $\|Sf\|_p$ is always equivalent to the norm $\|f\|_p$, $1<p<\infty$, (with constants depending on $p$); for other accepted definitions of a square function the equivalence of the norms is true only for homogeneous filtrations, but fails in the non-homogeneous case for $p\ne 2$. 
%through squares of martingale differences.

In the paper the expression  $A \lesssim B$ means there exists a universal constant $c$, independent of
the important quantities, such as function, weight, measure and filtration, so
that $A \leqslant c B$. If the constant depends on some parameters, say $a$ and $b$, we will write  $A\underset{a,b}{\lesssim} B$.

The theorem below is presented just for the sake of completeness. Estimate \eqref{UpperBd-01} can be easily obtained from known results, see Section \ref{s: trivial estimates} below. A bit stronger estimate \eqref{UpperBd} can be obtained from the upper bound (Theorem \ref{t:UpperBd} below) via Proposition \ref{p:lwr-uprBd-S}.
\begin{theorem}
  \label{thm: A2 lower}Given the interval $[0, 1]$ and any discrete time
  atomic filtration and any measure, then there holds
\begin{align}
\label{UpperBd}
 \| f \|\ci {L^2 (w)}  & \lesssim [w]^{1 / 2}\ci {2, \mathcal{D}} [w]\ci {\infty,
     \mathcal{D}}^{1 / 2} \| S f \|\ci {L^2 (w)}
   \\ \label{UpperBd-01}
& \le 2 [w]\ci {2, \mathcal{D}} \| S f \|\ci {L^2 (w)} . 
\end{align}
\end{theorem}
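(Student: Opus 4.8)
The plan is to prove \eqref{UpperBd-01} directly and then derive the stronger estimate \eqref{UpperBd} via the indicated references. For \eqref{UpperBd-01}, the natural route is duality together with the uniform boundedness of predictable martingale multipliers. Fix $f\in L^2(w)$ and write $f=\sum_n\Delta_n f$. For any choice of signs $\varepsilon=\{\varepsilon\ci I\}_{I\in\cD}$ with $\varepsilon\ci I\in\{-1,1\}$, the multiplier $T_\varepsilon f=\sum_I\varepsilon\ci I\Delta\ci I f$ satisfies $\|T_\varepsilon\|\ci{L^2(w)\to L^2(w)}\lesssim[w]\ci{2,\cD}$ by the Thiele--Treil--Volberg / Lacey theorem quoted above (statement \cond4 with the quoted linear bound). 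The idea is then: for $f,g\in L^2(w)$, expand $\langle f,g\rangle\ci{L^2(w)}$ over the orthogonal (in $L^2$, hence the weight appears) decomposition into martingale differences, insert signs $\varepsilon\ci I$, average over all sign choices, and use that $\E_\varepsilon\varepsilon\ci I\varepsilon\ci J=\delta_{IJ}$ to collapse the bilinear form to $\E_\varepsilon\langle T_\varepsilon f, T_\varepsilon g\rangle$-type identities. Concretely, after a Khintchine-type averaging one gets, for every $x$, a pointwise comparison of $|f(x)|$ and $|g(x)|$ with $Sf(x)$ and $Sg(x)$ weighted appropriately; integrating against $w$ and applying Cauchy--Schwarz in the weight yields $|\langle f,g\rangle\ci{L^2(w)}|\lesssim[w]\ci{2,\cD}\|Sf\|\ci{L^2(w)}\|Sg\|\ci{L^2(w)}$. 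Taking the supremum over $g$ with $\|Sg\|\ci{L^2(w)}=\|g\|\ci{L^2(w)}\le 1$ (again valid since averaging signs shows $\|Sg\|\ci{L^2(w)}\asymp\|g\|\ci{L^2(w)}$ for $w\in A_2^\cD$) gives $\|f\|\ci{L^2(w)}\lesssim[w]\ci{2,\cD}\|Sf\|\ci{L^2(w)}$, and the constant can be tracked to be at most $2$.

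The cleaner bookkeeping, which I would actually adopt, avoids the two-function duality: apply the uniform multiplier bound to the single operator $T_\varepsilon$, note $\|T_\varepsilon f\|\ci{L^2(w)}\le[w]\ci{2,\cD}\|f\|\ci{L^2(w)}$ is equivalent by a further averaging argument to $\|f\|\ci{L^2(w)}\le[w]\ci{2,\cD}\|T_\varepsilon f\|\ci{L^2(w)}$ is \emph{not} automatic, so instead square, integrate in $\varepsilon$, and use $\E_\varepsilon\|T_\varepsilon f\|\ci{L^2(w)}^2 = \E_\varepsilon\int\big(\sum_I\varepsilon\ci I\Delta\ci I f\big)^2 w\,d\nu$; the cross terms vanish in expectation only after also integrating, since $\Delta\ci I f$ and $\Delta\ci J f$ are not pointwise orthogonal. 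The key identity is $\E_\varepsilon\big(\sum_I\varepsilon\ci I\Delta\ci I f(x)\big)^2=\sum_I(\Delta\ci I f(x))^2=(Sf(x))^2$ pointwise, so $\E_\varepsilon\|T_\varepsilon f\|\ci{L^2(w)}^2=\|Sf\|\ci{L^2(w)}^2$. Combined with the uniform bound $\|T_\varepsilon f\|\ci{L^2(w)}\le c[w]\ci{2,\cD}\|f\|\ci{L^2(w)}$ this immediately gives $\|Sf\|\ci{L^2(w)}\le c[w]\ci{2,\cD}\|f\|\ci{L^2(w)}$, which is the upper square function estimate — but I want the \emph{lower} one. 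For that I run the same averaging on the multipliers but start from $f=T_\varepsilon(T_\varepsilon f)$ (since $\sigma\ci I^2=1$): $\|f\|\ci{L^2(w)}=\|T_\varepsilon T_\varepsilon f\|\ci{L^2(w)}\le c[w]\ci{2,\cD}\|T_\varepsilon f\|\ci{L^2(w)}$, then average in $\varepsilon$ to get $\|f\|\ci{L^2(w)}^2\le c^2[w]\ci{2,\cD}^2\,\E_\varepsilon\|T_\varepsilon f\|\ci{L^2(w)}^2 = c^2[w]\ci{2,\cD}^2\|Sf\|\ci{L^2(w)}^2$, i.e. \eqref{UpperBd-01} with an explicit constant (which, chasing the normalization of the quoted theorem and Jensen, one arranges to be $2$).

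Finally, \eqref{UpperBd} — the refinement with $[w]^{1/2}\ci{2,\cD}[w]\ci{\infty,\cD}^{1/2}$ in place of $[w]\ci{2,\cD}$, which is genuinely stronger in view of \eqref{eq: A2 versus Ainfinity} — is obtained not by the crude argument above but by invoking Theorem \ref{t:UpperBd} (the sharp upper bound $\|Sf\|\ci{L^2(w)}\lesssim[w]^{1/2}\ci{2,\cD}[w]\ci{\infty,\cD}^{1/2}\|f\|\ci{L^2(w)}$, or its analogue) together with Proposition \ref{p:lwr-uprBd-S}, which is evidently a general duality/polarization device converting an upper square function bound with a given constant into a lower bound with the same constant. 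So the structure is: (i) prove \eqref{UpperBd-01} by the sign-averaging argument; (ii) cite Theorem \ref{t:UpperBd} and Proposition \ref{p:lwr-uprBd-S} for \eqref{UpperBd}; (iii) remark that \eqref{UpperBd} implies \eqref{UpperBd-01} via \eqref{eq: A2 versus Ainfinity} up to the constant, which is why the two displays are chained with ``$\le$''. The main obstacle I anticipate is purely bookkeeping: pinning the absolute constant at exactly $2$ requires being careful about the normalization in the quoted multiplier bound and about whether one averages before or after squaring; there is no conceptual difficulty, since everything rests on the already-cited linear $A_2^\cD$ bound for predictable multipliers and the pointwise identity $\E_\varepsilon(\sum_I\varepsilon\ci I\Delta\ci I f)^2=(Sf)^2$.
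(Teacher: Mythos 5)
Your proposal is correct and follows the paper's route: \eqref{UpperBd-01} is obtained from the uniform $A_2^\cD$-bound on predictable multipliers $T_\sigma$ together with the involution property $T_\sigma^2=\mathrm{Id}$ and sign-averaging (this is exactly Section~\ref{s: trivial estimates}), while \eqref{UpperBd} follows by applying the sharp upper bound of Theorem~\ref{t:UpperBd} to $w^{-1}$ through the duality device of Proposition~\ref{p:lwr-uprBd-S}, with the ``$\le 2$'' step coming from \eqref{eq: A2 versus Ainfinity}. The only small slips --- the cross terms in $\E_\sigma\bigl(\sum_I \sigma\ci I \Delta\ci I f\bigr)^2$ already cancel pointwise, not merely after integration, and Theorem~\ref{t:UpperBd} carries $[w^{-1}]\ci{\infty,\cD}$ rather than $[w]\ci{\infty,\cD}$ (which is precisely why one applies it to $w^{-1}$) --- do not affect the argument.
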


Here are our main theorems
\begin{theorem}
  \label{thm: A2 classical} The exponent $1$ of $[w]\ci {2, \mathcal{D}}$ in \eqref{UpperBd-01} is optimal. 
Namely, given $A\ge 1$ one can find a weight $w$ defined on the interval $[0, 1]$ satisfying the classical $A_2$ conditions, such that   $[w]_{2, \tmop{cl}}=A$ and a non-homogeneous dyadic filtration $\cD$ such that for some $f\in L^2(w)$
\[
\| f \|\ci{L^2(w)} \gtrsim A \| Sf \|\ci{L^2(w)} = [w]\ci{2, \tmop{cl}}\| Sf \|\ci{L^2(w)};
\]
recall that the implied constant here is an absolute one. 
\end{theorem}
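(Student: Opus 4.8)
The plan is to build a non-homogeneous dyadic filtration on $[0,1]$ with extremely unbalanced splittings, together with a weight $w$ that is constant on a long initial chain of atoms and then jumps, so that the classical $A_2$ characteristic is exactly $A$ while the inverse square-function inequality degrades to the first power. Concretely, I would take a sequence of nested intervals $I_0=[0,1]\supset I_1\supset I_2\supset\cdots$, where at each step $I_{k}$ splits into $I_{k+1}$ and its sibling $J_{k+1}$ with $|I_{k+1}|/|I_k|=\lambda$ for some small $\lambda$ to be chosen (so the filtration is dyadic but wildly non-homogeneous). On the complement $[0,1]\setminus I_N$ (for a large depth $N$) I let $w\equiv 1$, and on $I_N$ I give $w$ a value $t$ (either $t$ large or $t=1/t'$ small), chosen so that the only interval where $\langle w\rangle\langle w^{-1}\rangle$ is large is one of the atoms along the chain; balancing the measure ratios against the value $t$ forces $[w]_{2,\mathrm{cl}}=A$ exactly. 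The key point is that because one child is always much smaller than its parent, the martingale difference $\Delta\ci{I_k}h$ carries very little $w$-mass on the large sibling but the test function can be arranged so that $h$ itself is large in $w$-norm.

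Next I would choose the test function. The natural candidate is $f=\1\ci{I_N}$ or a suitable normalization thereof, or more precisely $f=w^{-1}\1\ci{I_N}$ so that $\|f\|\ci{L^2(w)}^2=\langle w^{-1}\rangle\ci{I_N}|I_N|$ is comparable to $t^{-1}|I_N|$ in the relevant regime. The martingale differences of $f$ live on the chain $I_0,\dots,I_{N-1}$, and each $\Delta\ci{I_k}f$ is a two-valued function: a tiny value on $I_{k+1}$ times $\1\ci{I_{k+1}}$ plus (essentially) a constant on $J_{k+1}$. Because $J_{k+1}$ has $w$-measure comparable to $|J_{k+1}|\approx|I_k|$ when $w=1$ there, while $\langle f\rangle\ci{I_k}\approx t^{-1}|I_N|/|I_k|$ is geometrically small in $k$, the quantity $\|Sf\|\ci{L^2(w)}^2=\sum_k\|\Delta\ci{I_k}f\|\ci{L^2(w)}^2$ is dominated by a geometric series whose sum is only a constant (not $A$) times the size of the leading term. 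Carrying out this computation and comparing with $\|f\|\ci{L^2(w)}^2$ should yield the ratio $\|f\|\ci{L^2(w)}/\|Sf\|\ci{L^2(w)}\gtrsim A$ once $\lambda$ and $t$ are tuned to make $[w]_{2,\mathrm{cl}}=A$; the relation $t\sim A$ and $\lambda\sim A^{-1}$ is what I expect.

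I would then verify the classical $A_2$ bound carefully: one must check $\langle w\rangle\ci I\langle w^{-1}\rangle\ci I\le A$ not just for the dyadic atoms $I\in\cD$ but for \emph{all} subintervals $I\subset[0,1]$, since the claim is about $[w]_{2,\mathrm{cl}}$. Because $w$ is piecewise constant (value $1$ outside a single small interval $I_N$ and value $t$ on $I_N$), an arbitrary interval $I$ either misses $I_N$, contains $I_N$, or partially overlaps it; in each case $\langle w\rangle\ci I\langle w^{-1}\rangle\ci I$ is a monotone function of $|I\cap I_N|/|I|$, maximized when $|I\cap I_N|/|I|$ equals a specific ratio, and this maximum is what I would set equal to $A$ by choosing $t$. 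This elementary one-variable optimization is the routine part. The main obstacle, and where I would spend the most care, is the simultaneous bookkeeping: one has three free parameters (the splitting ratio $\lambda$, the jump value $t$, the depth $N$) and three constraints to meet exactly or asymptotically — that $[w]_{2,\mathrm{cl}}$ equals $A$ on the nose (the theorem asks for equality, not just $\asymp A$), that the geometric series for $\|Sf\|\ci{L^2(w)}^2$ really is summable with the leading term dominating, and that the ratio comes out $\gtrsim A$ with an \emph{absolute} implied constant independent of $A$. Getting the equality $[w]_{2,\mathrm{cl}}=A$ rather than a two-sided estimate may require a small additional adjustment of the weight (e.g. a continuous deformation of $t$), which is harmless since all the estimates are robust.
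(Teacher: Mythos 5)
Your construction does not produce the first-power blow-up: a two-valued weight concentrated on a single interval can give at best the square-root bound, which is already the behavior in the homogeneous/$A_\infty$ case. To see this, take $f = w^{-1}\1\ci{I_N} = t^{-1}\1\ci{I_N}$, write $\delta=|I_N|$ and $\mu_k = |I_k| = \lambda^k$, so that $\|f\|^2\ci{L^2(w)} = \delta/t$. Two elementary lower bounds on $\|Sf\|\ci{L^2(w)}$ trap the ratio. First, on $I_N$ (where $w=t$) the last difference is $\Delta\ci{I_{N-1}} f = t^{-1}(1-\lambda)$, so
\[
\|Sf\|^2\ci{L^2(w)} \;\ge\; \|\Delta\ci{I_{N-1}} f\|^2\ci{L^2(w)} \;\ge\; t^{-2}(1-\lambda)^2\cdot t\delta \;=\; (1-\lambda)^2\,\|f\|^2\ci{L^2(w)},
\]
which forces $\|f\|\ci{L^2(w)}/\|Sf\|\ci{L^2(w)} \le (1-\lambda)^{-1}$. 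Second, summing the contributions of $\Delta\ci{I_k} f$ on the siblings $J_{k+1}$ (where $w=1$ and $\Delta\ci{I_k} f = -\langle f\rangle\ci{I_k} = -t^{-1}\delta/\mu_k$) gives
\[
\|Sf\|^2\ci{L^2(w)} \;\ge\; \sum_{k=0}^{N-1} t^{-2}\delta^2\mu_k^{-1}(1-\lambda) \;=\; t^{-2}\delta\lambda(1-\delta),
\]
so $\|f\|^2\ci{L^2(w)}/\|Sf\|^2\ci{L^2(w)} \lesssim t/\lambda$. Optimizing $\lambda$ against these two constraints yields at most $\|f\|\ci{L^2(w)}/\|Sf\|\ci{L^2(w)} \lesssim \sqrt{t} \sim \sqrt{A}$, attained at $\lambda \approx 1 - t^{-1/2}$. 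For your proposed choice $\lambda \sim A^{-1}$ the first bound already shows the ratio is $O(1)$. So the construction never reaches the first power, and your heuristic that the geometric series for $\|Sf\|^2\ci{L^2(w)}$ ``sums to a constant times the leading term'' is correct but beside the point: that leading term is itself already a constant fraction of $\|f\|^2\ci{L^2(w)}$, because $f$ lives where $w$ is heavy.

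The missing idea is that the first-power example requires roughly $A^2$ distinct scales to contribute comparable amounts, and a single jump in the weight simply cannot interact nontrivially with that many generations. The paper's construction (Section 5) builds a genuinely multi-scale weight from a random walk in the domain $\Omega_A = \{(u,v) : 1\le uv\le A\}$: starting at $uv = A$, at each step the small sibling $I_k^\star$ (with transition probability $\alpha_k^\star \approx 1/A$) is sent to the lower hyperbola $uv=1$ while the large child $I_k$ continues, and $Q_k = u_kv_k$ decays slowly enough to stay above $A/2$ for $n\sim A^2$ steps. Each such step contributes $\gtrsim \langle w\rangle\ci{I_0}|I_0|$ to the testing sum of Section 4, yielding a total $\gtrsim A^2\langle w\rangle\ci{I_0}|I_0|$ and hence $C\gtrsim A$. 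Crucially, the resulting weight takes a different value on each sibling $I_k^\star$, and $\langle w\rangle\ci{I_k}$ grows like $(1+1/A)^k$ along the chain; this exponential cascade of weight values is what a two-valued weight cannot replicate.
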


Since $[w]\ci {2, \mathcal{D}} \leqslant [w]\ci {2, \tmop{cl}}$ this indeed means that the estimate $\| f \|\ci {L^2 (w)} \lesssim [w]\ci {2, \mathcal{D}} \| S
f \|\ci {L^2 (w)}$ in Theorem \ref{thm: A2 lower} is sharp.

\begin{theorem}
  \label{thm: Ainfinity lower}
  Assumption  $w\in A_\infty^{\tmop{cl}}$ is not sufficient for an estimate 
  \[
  \|f\|\ci{L^2(w)} \le C([w]_{\infty,\tmop{cl}})\|Sf\|\ci{L^2(w)}.
  \]  Namely, one can find a weight $w$ on the interval $[0, 1]$ satisfying the classical $A_\infty$ condition and a non-homogeneous dyadic filtration for which there exists a sequence of functions $f_n\in L^2(w)$ with
  \[
  \|Sf_n\|\ci{L^2(w)} =1, \qquad \|f_n\|\ci{L^2(w)}\to \infty\quad \text{as }n\to \infty. 
  \]
  \end{theorem}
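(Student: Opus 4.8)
The plan is to construct, for each $n$, a non-homogeneous dyadic filtration on $[0,1]$ and a weight $w$ built as a martingale along that filtration, together with a test function $f_n$, so that the ratio $\|f_n\|_{L^2(w)}/\|Sf_n\|_{L^2(w)}$ grows without bound while $[w]_{\infty,\mathrm{cl}}$ stays bounded independently of $n$. The basic mechanism is the same one that governs Theorem \ref{thm: A2 classical}: on a single step where an interval $I$ splits into two children $I_-, I_+$ of very disbalanced measures (say $|I_+|/|I| = \varepsilon$ small), the martingale difference $\Delta_I$ of a function is a single ``spike'' on the small child, and if $w$ is chosen so that $w$ is large on the big child and moderate on the small one, then $\int_I (\Delta_I h)^2 w$ can be made much smaller than the contribution of that step to $\|h\|_{L^2(w)}^2$. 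The key new point for $A_\infty$ (as opposed to $A_2$) is that a single such step contributes only a bounded amount to $[w]_{\infty,\mathrm{cl}}$ regardless of how disbalanced it is, because the localized maximal function $M_I w$ is controlled by the averages $\langle w\rangle_J$ over subintervals $J$, and disbalance alone does not force $\langle M_I w\rangle_I/\langle w\rangle_I$ to blow up the way $\langle w\rangle_I\langle w^{-1}\rangle_I$ does. So one should be able to \emph{iterate} the disbalanced splitting many times (roughly $n$ times, or with $n$-dependent geometry) while keeping $[w]_{\infty,\mathrm{cl}} = O(1)$, and accumulate an arbitrarily large gain in the square-function inequality.

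Concretely, I would set up a ``cascade'': start with $I^{(0)} = [0,1]$, and at stage $k$ split $I^{(k)}$ into a big child $B^{(k)}$ and a small child $I^{(k+1)}$ with $|I^{(k+1)}| = \varepsilon |I^{(k)}|$ for a fixed small $\varepsilon$; on $B^{(k)}$ stop (make it homogeneous/trivial thereafter or irrelevant). Choose the weight $w$ to be constant $=a_k$ on $B^{(k)}$ for a suitable increasing (or decreasing) sequence $a_k$, and arrange $\langle w\rangle_{I^{(k)}}$ to behave like a controlled geometric sequence, so that at each step the ``jump'' $\langle w\rangle_{B^{(k)}}$ versus $\langle w\rangle_{I^{(k+1)}}$ is bounded — this keeps $[w]_{\infty,\mathrm{cl}}$ bounded. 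Then take $f_n$ to be (a truncation of) the martingale $\sum_{k<n} \Delta_{I^{(k)}} g$ for a well-chosen $g$, e.g.\ the one that produces equal contributions $\int (\Delta_{I^{(k)}} f_n)^2\,w = 1/n$ at each of the $n$ stages, so $\|Sf_n\|_{L^2(w)} = 1$; one then checks that $\|f_n\|_{L^2(w)}^2 \gtrsim \sum_{k<n} c = cn \to\infty$, where $c>0$ comes from the per-step gain, which is uniform in $k$ by scaling. The bookkeeping reduces to a one-variable recursion for the pair $(\langle w\rangle_{I^{(k)}},\langle w^{-1}\rangle_{I^{(k)}})$ or, cleaner, for the two values of $w$ and the height of $f_n$ on each $B^{(k)}$.

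The main obstacle, and the place requiring care, is verifying that $[w]_{\infty,\mathrm{cl}} < \infty$ with a bound independent of $n$ — and moreover the \emph{classical} $A_\infty$ characteristic, where the supremum is over \emph{all} subintervals $J\subseteq[0,1]$, not just dyadic atoms. One must check that an arbitrary interval $J$ straddling several stages of the cascade does not see a large value of $\langle M_J w\rangle_J/\langle w\rangle_J$; the worst candidates are intervals that contain a short initial run $B^{(k)}, B^{(k+1)}, \dots$ near the left endpoint, or intervals nested deep inside the cascade near the accumulation point of the $I^{(k)}$. This is exactly the ``additional difficulty, similar to that of estimating Haar shifts with Bellman functions'' the introduction alludes to, but in the counterexample direction it is a direct (if fiddly) estimate: because $M_J w$ on such a $J$ is dominated by $\max_k a_k$ over the relevant range and $\langle w\rangle_J$ is comparable to that same maximum up to the geometric factor $\varepsilon$, one gets $\langle M_J w\rangle_J \lesssim \langle w\rangle_J$ with an absolute constant depending only on $\varepsilon$ and the geometric ratios, \emph{not} on $n$. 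Once this uniform $A_\infty$ bound is in hand, the failure of the lower square function estimate is immediate from the cascade computation, proving Theorem \ref{thm: Ainfinity lower}.
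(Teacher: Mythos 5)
Your proposed geometric cascade with bounded weight jumps has a genuine gap: the per-step gain is \emph{not} ``uniform in $k$ by scaling,'' and the testing sum does not blow up. Write $a=\langle w\rangle_{B^{(k)}}$, $b=\langle w\rangle_{I^{(k+1)}}$, and suppose $b=ra$ with $r$ bounded and the disbalance $\varepsilon$ fixed. The term that step $k$ contributes to the testing sum \eqref{eq: modified testing} works out to a constant depending only on $r,\varepsilon$ times $\int_{I^{(k)}}w$. But the martingale dynamics give
\[
\int_{I^{(k+1)}}w=\varepsilon\, b\,|I^{(k)}|=\frac{\varepsilon r}{(1-\varepsilon)+\varepsilon r}\int_{I^{(k)}}w,
\]
so $\int_{I^{(k)}}w$ decays geometrically in $k$ as soon as $\varepsilon r<1-\varepsilon$, and the whole testing sum is $O(\int_{I_0}w)=O(\langle w\rangle_{I_0})$ --- no counterexample at all. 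To stop the decay you would need $r\gtrsim 1/\varepsilon$, an unbounded jump; then $w$ behaves like $|x-x_*|^{-1}$ near the accumulation point $x_*$ of the cascade, and for $J=I^{(k)}$ one finds $\langle M_J w\rangle_J/\langle w\rangle_J\gtrsim n-k$, so $[w]_{\infty,\mathrm{cl}}$ grows together with the testing sum. Either way your construction fails.

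The paper's proof sidesteps all of this by not constructing an adversarial weight at all: it fixes $w(x)=x$, whose classical $A_\infty$ constant is an absolute number (nothing to verify), and tunes only the \emph{geometry} of the filtration. The intervals shrink arithmetically, $I_{k-1}=[(k-1)\varepsilon,1]$, stopping on the small left child $[(k-1)\varepsilon,k\varepsilon]$ and continuing on the big right child, so the disbalance $\alpha_-=\varepsilon/(1-\varepsilon(k-1))$ varies with $k$. A direct computation then shows that the per-step term decays like $1/k$, slowly enough that over $N=1/\varepsilon$ steps the testing sum is $\gtrsim\ln N$ while $\langle w\rangle_{I_0}=1/2$ stays fixed. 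The lesson is that the blow-up should come from a filtration poorly adapted to a fixed benign weight, not from trying to push a martingale weight to be simultaneously bad for the square function and controlled in $A_\infty$ --- those two goals pull against each other, which is exactly what your computation would reveal if carried through.
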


Since $[w]\ci {\infty, \tmop{cl}} \geqslant [w]\ci {\infty, \tmop{scl}} \geqslant
[w]\ci {\infty, \mathcal{D}}$ this means in particular that no definition of
$A\ci {\infty}$ is sufficient for a lower square function estimate in the
non-homogeneous case.

The following theorem can be obtained combining results from \cite{ChWiWo1985} and \cite{Wilson1989}, but here we present a direct proof. 

Recall that the $n$-adic filtration is the atomic filtration where each atom has exactly $n$ children of equal measure. 
\begin{theorem}
  \label{thm: Ainfinity n adic}For the $n$-adic filtration
  \[ 
  \| f \|\ci {L^2 (w)} \lesssim n [w]^{1 / 2}\ci {\infty, \tmop{scl}} \| S f
     \|\ci {L^2 (w)} . 
     \]
\end{theorem}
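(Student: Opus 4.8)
The plan is to reduce the estimate $\|f\|_{L^2(w)} \lesssim n[w]_{\infty,\mathrm{scl}}^{1/2}\|Sf\|_{L^2(w)}$ to a Bellman-function argument on a single atom, where the variable is a distribution function of $w$. Since the $n$-adic filtration is homogeneous, all $n$ children of an atom $I$ have measure $|I|/n$, and a single martingale difference $\Delta_I f$ takes $n$ values on these children. The first step is to dualize: by the equivalence with martingale multipliers and the orthogonality of the $D_n$, it suffices to prove the weighted Littlewood–Paley lower bound, i.e.\ for all $g\in L^2(w^{-1})$ one has $\int |g| \lesssim n[w]_{\infty,\mathrm{scl}}^{1/2} (\sum_I \|\Delta_I g\|_{?}^2)^{1/2}$ type inequality, or more directly to run a localized recursion on $\langle |f|^2 w\rangle_I$ controlled by $\langle (Sf)^2 w\rangle_I$ and the local behaviour of $w$.

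The heart of the matter is a Bellman function $\mathbf{B}$ of a few real variables — essentially $\langle f\rangle_I$, $\langle f^2 w\rangle_I$ (or the weighted $L^2$ mass of the ``tail'' square function), $\langle w\rangle_I$, and a parameter carrying the $A_\infty^{\mathrm{scl}}$ information — with a concavity/supersolution property that degrades linearly in $n$ when one passes from $I$ to its $n$ children. Following the idea of \cite{TV-Entropy2016}, I would take the distribution function of $w$ on $I$ as the genuine variable so that $\langle M_I w\rangle_I / \langle w\rangle_I \le [w]_{\infty,\mathrm{scl}}$ can be inserted as a hard constraint; the maximal function appears because the lower square-function bound, unlike the unconditional-basis bound, genuinely needs $A_\infty$ rather than $A_2$, exactly as in \cite{Wilson1989, ChWiWo1985}. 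The $n$-adic splitting forces us to iterate the two-point concavity $n-1$ times across the children (or to use an $n$-point Jensen-type inequality with a loss), and each such step can cost a factor, producing the $O(n)$; the linear-in-$n$ loss is the price of not having a genuinely dyadic (two-child) structure where the Bellman estimate is loss-free.

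Concretely, the steps in order: (1) set up the weighted $L^2$ identity $\|f\|_{L^2(w)}^2 = \sum_I \langle |\Delta_I f|^2 w\rangle_I |I| + \text{(boundary term)}$, using homogeneity to rewrite each $\Delta_I f$ over the $n$ equal children; (2) define the Bellman candidate and verify its domain, size bound ($\mathbf{B} \lesssim n[w]_{\infty,\mathrm{scl}}^{1/2}$ on the relevant slice) and the main inequality $\mathbf{B}(x_I) \ge \frac1n\sum_{I'\in\mathrm{ch}(I)}\mathbf{B}(x_{I'}) + c\,|I|^{-1}\langle |\Delta_I f|^2 w\rangle_I |I|$ up to the $n$-loss; (3) telescope the main inequality down the tree to bound $\|f\|_{L^2(w)}$ by $\|\mathbf{B}\|_\infty^{1/2}\|Sf\|_{L^2(w)}$ modulo the usual summation-by-parts and a stopping-time/limiting argument to handle infinitely many generations and the unboundedness of $w$. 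The main obstacle I anticipate is establishing the main inequality for the Bellman function with the correct, sharp linear dependence on $n$: verifying concavity when one child can carry almost all the mass of $w$ (the ``disbalanced'' regime that drives the counterexamples of Theorems \ref{thm: A2 classical}–\ref{thm: Ainfinity lower}) is delicate, and it is precisely here — analogous to the difficulty of estimating Haar shifts by Bellman functions mentioned in the introduction — that the argument must be done carefully rather than by a routine concave-majorant computation.
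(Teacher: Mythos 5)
Your overall flavour is right --- a Bellman functional in the spirit of \cite{TV-Entropy2016} whose variable is the distribution function of $w$, the $A_\infty$ information entering through the maximal function, and an $O(n)$ loss coming from the $n$-adic splitting --- but there are two genuine gaps that prevent this from being a proof.

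First, the ``identity'' in your step (1) is false. Because the martingale differences $\Delta\ci I f$ are not orthogonal in $L^2(w)$, there is no weighted Parseval identity of the form $\|f\|^2\ci{L^2(w)} = \sum_I \int_I |\Delta\ci I f|^2 w + (\text{boundary term})$: the right-hand side is $\|Sf\|^2\ci{L^2(w)}$ plus a boundary contribution, and if that equalled $\|f\|^2\ci{L^2(w)}$ the theorem would be trivial. This is not a harmless slip; it is exactly the point that the whole argument has to circumvent.

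Second, and more structurally, you have skipped the reduction that makes the distribution-function Bellman approach possible. The paper does not run a Bellman recursion on quantities involving $f$. It first linearizes $S$, passes to the adjoint of the reconstruction operator, and invokes the martingale Carleson embedding theorem to reduce the lower square-function bound to a testing condition involving $w$ alone, namely $\frac{1}{|J|}\sum_{I\subset J}\frac{(w,h\ci I)\ci{L^2}^2}{\|h\ci I\|^2\ci{L^2(w)}}\lesssim n^2[w]\ci{\infty,\mathrm{scl}}\langle w\rangle\ci J$ for all $J\in\cD$. Only then is the Bellman functional introduced, and it is a functional of the distribution function of $w$ only: $B(N)=\int_0^\infty\bigl(N(t)-N(t)\ln N(t)\bigr)\,\mathd t$, whose size is controlled by $\|M\ci{I_0}w\|\ci{L^1}\le[w]\ci{\infty,\mathrm{scl}}\langle w\rangle\ci{I_0}$. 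Your proposed $\mathbf B$ carries $f$-dependent variables such as $\langle f\rangle\ci I$ and $\langle f^2w\rangle\ci I$, and it is not clear how such a mixed Bellman function telescopes to the asymmetric inequality $\|f\|\ci{L^2(w)}\lesssim n[w]^{1/2}\ci{\infty,\mathrm{scl}}\|Sf\|\ci{L^2(w)}$; this is precisely the difficulty the testing reduction is designed to remove. Finally, the paper's $n$-loss does not come from iterating a two-point concavity $n-1$ times, but from decomposing the $n$-adic Haar function $h\ci I$ into at most $n$ \emph{elementary} (two-child) Haar functions $h\ci{I,k}$ with $|h\ci I|=\sum_k|h\ci{I,k}|$, choosing the $k$ for which $|(w,h\ci{I,k})\ci{L^2}|\ge\frac1n|(w,h\ci I)\ci{L^2}|$, and applying the two-point concavity of $B$ to that single pair of children; the factor $1/n$ squares to $1/n^2$ in the Bellman inequality, giving $n$ after the final square root. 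Your mechanism would likely give an $O(n)$ loss too, but without the testing reduction and the elementary-Haar decomposition the argument as written does not go through.
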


\begin{rem}
The above theorem holds for an arbitrary \emph{homogeneous} filtration, i.e.~for a filtration such that for a certain constant $C\ti h>0$,
\[
\forall I\in\mathcal{D},\  \forall I'\in\tmop{ch}(I),\   | I | \le C\ti h  |I'|. 
\]
Then it can be seen from the proof that 
\[
\| f \|\ci {L^2 (w)} \underset{C\ti h}{\lesssim } [w]^{1 / 2}\ci {\infty, \tmop{scl}} \| S f
     \|\ci {L^2 (w)} .
\]

In particular, there holds $\| f \|\ci {L^2 (w)} \underset{n}{\lesssim} [w]^{1 / 2}\ci {\infty,
\mathcal{D}} \| S f \|\ci {L^2 (w)}$ with additional growth in $n$.

\end{rem}

The following result is probably well-known, see for example \cite{LaceyLi2016} for the version for a continuous square function. 
We present it just for the completeness, and we will just outline the proof of \eqref{eq: upper bound harder} in Section \ref{S: upper bound for the square function} and the proof of \eqref{eq: upper bound easy} in Section \ref{s: trivial estimates}.

\begin{theorem}
\label{t:UpperBd}
For an arbitrary atomic filtration and a weight $w\in A_2^\cD$
\begin{align}
\label{eq: upper bound harder}
\| S f\|\ci{L^2(w)} & \lesssim [w]\ci{2,\cD}^{1/2} [w^{-1}]\ci{\infty,\cD}^{1/2} \|f\|\ci{L^2(w)}
\\ 
\label{eq: upper bound easy}
 & \le 2 [w]\ci {2, \mathcal{D}} \|  f \|\ci {L^2 (w)}
\end{align}
\end{theorem}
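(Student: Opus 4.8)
The plan is to prove Theorem \ref{t:UpperBd}, the upper bound $\| S f\|\ci{L^2(w)} \lesssim [w]\ci{2,\cD}^{1/2} [w^{-1}]\ci{\infty,\cD}^{1/2} \|f\|\ci{L^2(w)}$, by a standard duality/Bellman-type argument, reducing it to a weighted Carleson embedding. First I would dualize: writing $\sigma = w^{-1}$, it suffices to show that for all $g \in L^2(w)$ and $h \in L^2(w)$ (or, after the usual substitution $f = g\sigma$, for $g,h\in L^2(\sigma)$),
\[
\sum_{I\in\cD} |\Delta\ci I (g\sigma)|\, |\Delta\ci I(h\sigma)| \cdot |I| \;\lesssim\; [w]\ci{2,\cD}^{1/2}[\sigma]\ci{\infty,\cD}^{1/2}\, \|g\|\ci{L^2(\sigma)}\|h\|\ci{L^2(\sigma)},
\]
which by Cauchy--Schwarz on $I$ will follow once one controls $\sum_I |\Delta\ci I(g\sigma)|^2 |I|$ by $[w]\ci{2,\cD}[\sigma]\ci{\infty,\cD}\|g\|_{L^2(\sigma)}^2$ (and symmetrically with $w$ for the $h$-side, absorbing the $[w^{-1}]\ci{\infty,\cD}^{1/2}$ factor appropriately). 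The key elementary input is the pointwise estimate on a martingale difference of a product: for $I\in\cD$ and $I'\in\tmop{ch}(I)$, $|\langle g\sigma\rangle\ci{I'} - \langle g\sigma\rangle\ci I|$ is bounded by a sum of $\langle\sigma\rangle$-weighted oscillations of $g$ plus $\langle g\rangle^\sigma$-weighted oscillations of $\sigma$, the usual ``Leibniz rule'' splitting $\Delta\ci I(g\sigma) \approx \sigma\,\Delta\ci I g + g\,\Delta\ci I\sigma$ understood in an averaged sense.

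Next I would organize the resulting two sums. The first sum, roughly $\sum_I \langle\sigma\rangle\ci I^2 |\Delta\ci I g|^2_{(\sigma)} |I|$ where the subscript indicates the oscillation is measured in $L^2(\sigma)$, is handled by pulling out $\sup_I \langle w\rangle\ci I\langle\sigma\rangle\ci I \le [w]\ci{2,\cD}$ and recognizing what remains as $[w]\ci{2,\cD}$ times the unweighted-type square function bound $\sum_I \langle\sigma\rangle\ci I |\Delta\ci I g|^2_{(\sigma)}|I| \lesssim \|g\|_{L^2(\sigma)}^2$, which is just orthogonality of martingale differences in $L^2(\sigma)$ — here one uses that $S$ is, up to the $A_2$ machinery, an isometry-type object on the \emph{unweighted} side. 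The second sum, involving $\langle g\rangle^\sigma$ and oscillations of $\sigma$, is where the $A_\infty$ characteristic enters: one writes it as a Carleson-type sum $\sum_I (\langle g\rangle^\sigma_I)^2\,\alpha\ci I$ with $\alpha\ci I := \langle w\rangle\ci I^{-1}\cdot(\text{oscillation of }\sigma\text{ on }I)\cdot|I|$-type weights, and shows the $\alpha\ci I$ satisfy a Carleson measure condition with constant $\lesssim [w]\ci{2,\cD}[\sigma]\ci{\infty,\cD}$ by summing $\sum_{I'\subseteq I}\alpha\ci{I'}$ via a telescoping/maximal-function estimate governed by $[\sigma]\ci{\infty,\cD}$; then the weighted Carleson embedding theorem (with respect to $\sigma$) yields $\lesssim [w]\ci{2,\cD}[\sigma]\ci{\infty,\cD}\|g\|_{L^2(\sigma)}^2$. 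Combining, taking square roots, and symmetrizing between $g$ and $h$ (each side contributing one half-power of an $A_\infty$ characteristic — one of $\sigma=w^{-1}$ and one implicitly of $w$, but the $g\mapsto h$ roles are arranged so only $[w^{-1}]\ci{\infty,\cD}$ survives because the $w$-oscillation term on the $h$-side is absorbed by orthogonality against the $[w]\ci{2,\cD}$ factor) gives the claimed bound. Finally, \eqref{eq: upper bound easy} follows from \eqref{eq: upper bound harder} together with the Proposition giving $[w^{-1}]\ci{\infty,\cD}\le 4[w^{-1}]\ci{2,\cD} = 4[w]\ci{2,\cD}$, so that $[w]\ci{2,\cD}^{1/2}[w^{-1}]\ci{\infty,\cD}^{1/2}\le 2[w]\ci{2,\cD}$.

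The main obstacle I anticipate is the bookkeeping in the ``Leibniz splitting'': martingale differences of products do not split cleanly, and getting the right localized oscillation quantities — so that one genuinely recovers $[w]\ci{2,\cD}^{1/2}[w^{-1}]\ci{\infty,\cD}^{1/2}$ and not, say, $[w]\ci{2,\cD}$ on both factors — requires care about which average ($\langle\cdot\rangle\ci I$ versus $\langle\cdot\rangle\ci{I'}$, and unweighted versus $\sigma$-weighted) is attached to each term, and about using $\sum_{I'\in\tmop{ch}(I)}|I'|/|I| = 1$ at the right moment. The second delicate point is verifying the Carleson condition for the $\alpha\ci I$: this is precisely the ``additional difficulty, similar to that of estimating Haar shifts'' the introduction alludes to, and it is where one must invoke the $A_\infty$ (rather than merely $A_2$) nature of the hypothesis via a stopping-time or maximal-function decomposition of $\sigma$. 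Everything else — duality, Cauchy--Schwarz, the weighted Carleson embedding theorem — is routine and can be cited from \cite{ThTrVo-A2} or standard references; since the theorem is stated as ``probably well-known'' and only an outline is promised, I would keep the exposition at the level of these structural steps and defer the $A_\infty$ Carleson estimate to the cited literature where the $A_2^\cD$ sufficiency (and hence the relevant Carleson bounds) was established.
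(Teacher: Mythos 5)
Your proposal takes a genuinely different route from the paper, and as sketched it has a gap at the very first step. The paper proves \eqref{eq: upper bound harder} through \emph{sparse domination}: Lemma~\ref{l: sparse square function} (via the weak-$(1,1)$ bounds for $S$ and $M^{\cD}$ and a stopping-time argument) produces a sparse family $\cS$ with $Sf\lesssim S\ci\cS f$ pointwise, where $S\ci\cS f=(\sum_{I\in\cS}\langle|f|\rangle^2\ci I\1\ci I)^{1/2}$. After the substitution $f=w^{-1}g$ (which you also use), the martingale Carleson embedding theorem reduces the claim to the testing inequality $\sum_{I\in\cS,\,I\subset J}\langle w^{-1}\rangle\ci I^2\langle w\rangle\ci I|I|\lesssim [w]\ci{2,\cD}[w^{-1}]\ci{\infty,\cD}\langle w^{-1}\rangle\ci J|J|$, which drops out in two lines: pull out $[w]\ci{2,\cD}$ and use sparseness to bound $\sum_{I\in\cS,\,I\subset J}\langle w^{-1}\rangle\ci I|I|$ by $\|M^{\cD}\ci J(w^{-1})\|\ci{L^1}$. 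The sparseness is what makes this last bound hold with only the $A_\infty$ constant; for the full sum over $\cD(J)$ it is false.

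The gap in your version is at the duality step: the displayed quantity $\sum_I|\Delta\ci I(g\sigma)|\,|\Delta\ci I(h\sigma)|\,|I|$ does not parse, since $\Delta\ci I(g\sigma)$ is a function of $x$ (constant on children of $I$, but not on $I$), and in any case $\|Sf\|^2\ci{L^2(w)}$ is a quadratic, not bilinear, quantity — there is no second test function $h$ to dualize against. One can linearize via $\vec S$ and pair against $G\in L^2(w^{-1};\ell^2)$, but that is not what you wrote. Beyond that, the Leibniz-rule splitting plus Carleson verification that you outline is a real alternative strategy, but it is precisely where the content of the argument lives, and you flag the bookkeeping yourself as the hard part without carrying it out; it is not clear from the sketch that the constant lands on $[w]\ci{2,\cD}^{1/2}[w^{-1}]\ci{\infty,\cD}^{1/2}$ rather than picking up $[w]\ci{\infty,\cD}$ as well. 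The paper's sparse domination avoids the Leibniz splitting entirely and makes the Carleson step immediate. Your derivation of \eqref{eq: upper bound easy} from \eqref{eq: upper bound harder} via $[w^{-1}]\ci{\infty,\cD}\le 4[w^{-1}]\ci{2,\cD}=4[w]\ci{2,\cD}$ is correct; the paper instead obtains \eqref{eq: upper bound easy} directly from martingale multiplier bounds in Section~\ref{s: trivial estimates}, but both routes are valid.
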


\section{Reduction of lower bound to an embedding theorem}

It is more convenient to treat the square function $S$ as a linear operator, by paying the price of treating it as an operator to the space of vector-valued functions. 

Namely, define $\vec{S} : L^2_0 \rightarrow L^2 (\ell^2)$ as
\[ 
\vec S h  %\{ h\ci I \}\ci{I\in\mathcal{D}} 
= \{
   \Delta\ci I h \}\ci{I\in\cD}.
\]
Here we treat the sequence $\{\Delta\ci I h \}\ci{I\in\cD}$ as an element of the $\ell^2$-valued space $L^2(\ell^2)$, i.e. we associate with this sequence the function $\vec S h$ of two variables, $x\in \Omega$, $k\in \mathbb{N}$, 
\begin{align*}
\vec S h (x, k) = \Delta\ci I h (x), \qquad \text{where } I\in \cD \text{ is such that } \rk (I) = k. 
\end{align*}
Since for all $x\in\Omega$
\[
| S h(x) | = \|\vec S h (x, \fdot) \|\ci{\ell^2},
\]
we conclude that 
\begin{align}
\label{S-vecS-02}
\| Sh\|\ci{L^2(w)} =  \|\vec S h  \|\ci{L^2(w;\,\ell^2)}  :=\biggl( \int_\Omega \|\vec S h(x, \fdot) \|\ci{\ell^2}^2 w(x) \dd x \biggr)^{1/2}.
\end{align}

So the estimates for the square function $S$ are equivalent (with the same constants) to the corresponding estimates for the vector-valued square function $\vec S$. 

\subsection{Trivial estimates}
\label{s: trivial estimates}

Let $T_\sigma$, $\sigma=\{\sigma\ci I\}\ci{I\in\cD}$, $\sigma\ci I\in\{-1,1\}$ be a martingale multiplier, 
\begin{align*}
T_\sigma f = \sum_{I\in\cD} \sigma\ci I \Delta\ci I f .
\end{align*}
Taking the average $\E_\sigma$ over all possible choices of $\sigma\ci I\in\{-1,1\}$ (i.e.~formally taking $\sigma\ci I$ to be independent random variables taking values $\pm1$ with probability $1/2$), we conclude that for almost all $x$
\begin{align*}
\E_\sigma \left( |T_\sigma f(x)|^2\right) = \left( S f(x)\right)^2.
\end{align*}
Therefore, for any weight $w$ and any $f\in L^2(w)$
\begin{align*}
\inf_\sigma \| T_\sigma f\|\ci{L^2(w)} \le \|Sf \|\ci{L^2(w)} \le \sup_\sigma \| T_\sigma f\|\ci{L^2(w)}.
\end{align*}
Thus, denoting by $M(w):= \sup_\sigma \|T_\sigma\|\ci{L^2(w)\to L^2(w)}$ we can see that
\begin{align*}
M(w)^{-1}\|  f\|\ci{L^2(w)} \le \|Sf \|\ci{L^2(w)} \le M(w) \|  f\|\ci{L^2(w)}.
\end{align*}
It is well known that for $w\in A\ci{2}^{\cD}$
\begin{align*}
\| T_\sigma \|\ci{L^2(w)\to L^2(w)} \lesssim [w]\ci{2,\cD}; 
\end{align*}
for the classical dyadic filtration on $\R$ this result was first proved in \cite{Wittwer2000}, and many different proofs are known now for homogeneous filtrations. For the non-homogeneous case it was proved in \cite{ThTrVo-A2} and then independently and by a different and easier method in \cite{La2017-SimpleA2}. 

In fact, using the sparse domination technique from \cite{La2017-SimpleA2} one can show that for any atomic filtration one can write the following (stronger) $A_2$--$A_\infty$ estimate
\begin{align}
\| T_\sigma \|\ci{L^2(w)\to L^2(w)} \lesssim [w]\ci{2,\cD}^{1/2} 
\left( [w]\ci{\infty,\cD}^{1/2} + [w^{-1}]\ci{\infty,\cD}^{1/2} \right).
\label{eq: Tsigma stronger}
\end{align}
%%

%See Section \ref{S: upper bound for the square function} for the proof.
Another trivial observation is that a lower bound for $Sf$ in $L^2(w)$ can be reduced to the upper bound in $L^2(w^{-1})$:
\begin{prop}
\label{p:lwr-uprBd-S}
Let $w>0$ a.e. Then
\begin{align}
\label{lwr-uprBd-S}
\|f\|\ci{L^2(w)} \le \|S\|\ci{L^2(w^{-1})\to L^2(w^{-1})} \|S f\|\ci{L^2(w)}
\end{align}
\end{prop}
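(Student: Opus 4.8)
The plan is to establish the duality between the square function $S$ and its action as an operator, exploiting the near self-adjointness of $S$ with respect to the martingale difference structure. First I would recall that for $f \in L^2_0$ (mean zero), we have the orthogonal decomposition $f = \sum_{I \in \cD} \Delta\ci I f$, so that $\|f\|\ci{L^2}^2 = \sum_I \|\Delta\ci I f\|\ci{L^2}^2$ in the \emph{unweighted} space. The key identity is that for $f, g \in L^2_0$,
\[
\langle f, g \rangle\ci{L^2} = \sum_{I \in \cD} \langle \Delta\ci I f, \Delta\ci I g \rangle\ci{L^2} = \langle \vec S f, \vec S g \rangle\ci{L^2(\ell^2)},
\]
using that the $\Delta\ci I$ are orthogonal projections (onto $D\ci I$) and self-adjoint in the unweighted $L^2$.

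Next I would run the standard weighted duality argument. Write $\|f\|\ci{L^2(w)} = \sup \{ |\langle f, g w \rangle\ci{L^2}| : \|g\|\ci{L^2(w)} \le 1\}$, where the pairing $\langle f, gw\rangle\ci{L^2} = \int f g\, w\, \dd\nu$ is the unweighted one and $g$ ranges over (mean-adjusted) functions; more carefully, since $f$ has mean zero one may replace $g$ by $g - \langle g\rangle\ci\cX$ without changing the pairing, so we may take $g \in L^2_0$. Then apply the identity above:
\[
\langle f, gw \rangle\ci{L^2} = \langle \vec S f, \vec S(gw) \rangle\ci{L^2(\ell^2)} \le \|\vec S f\|\ci{L^2(w; \ell^2)} \cdot \|\vec S(gw)\|\ci{L^2(w^{-1}; \ell^2)},
\]
by Cauchy--Schwarz in $L^2(\ell^2)$ after inserting $w^{1/2} \cdot w^{-1/2}$. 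By \eqref{S-vecS-02} the first factor is $\|Sf\|\ci{L^2(w)}$, and the second factor is $\|S(gw)\|\ci{L^2(w^{-1})} \le \|S\|\ci{L^2(w^{-1}) \to L^2(w^{-1})} \|gw\|\ci{L^2(w^{-1})} = \|S\|\ci{L^2(w^{-1})\to L^2(w^{-1})} \|g\|\ci{L^2(w)}$. Taking the supremum over $\|g\|\ci{L^2(w)} \le 1$ yields \eqref{lwr-uprBd-S}.

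The main obstacle — really the only subtle point — is the handling of constants and the mean-zero reduction: one must check that replacing $g$ by $g - \langle g \rangle\ci\cX$ is legitimate (it is, because $f \perp \1$ in unweighted $L^2$), and that the supremum definition of the $L^2(w)$ norm via the unweighted pairing $\int fgw$ is correctly normalized (the dual of $L^2(w)$ under the pairing $\int fgw\,\dd\nu$ is again $L^2(w)$, isometrically). One should also note the density issue: it suffices to prove the inequality for $f$ in a dense subclass (e.g. finite martingale sums), on which all sums are finite, and then pass to the limit. I would also remark that the same argument shows the reverse-type statement and that $\|S\|\ci{L^2(w)\to L^2(w)}$ and $\|S\|\ci{L^2(w^{-1})\to L^2(w^{-1})}$ control each other's lower bounds symmetrically, which is the conceptual reason the weight $w^{-1}$ appears. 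No deeper input (no $A_2$ theory) is needed here — this proposition is purely the abstract duality between $S$ as a quadratic object and $\vec S$ as a linear isometry-type map.
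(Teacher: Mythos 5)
Your approach is essentially the same as the paper's: both rest on the fact that $\vec{S}$ is an unweighted isometry (your Parseval-type identity), combined with the $L^2(w)$--$L^2(w^{-1})$ duality. The paper packages this as $\vec{S}^{-1,\tmop{left}} = \vec{S}^*\bigm|\Ran\vec{S}$ and then passes to the adjoint, while you unwind the same facts via a supremum over test functions plus a weighted Cauchy--Schwarz; these are two phrasings of one argument, and the paper's closing remark about $\vec{S}L^2\neq\vec{S}L^2(w)$ is exactly your density reduction to finite martingale sums.

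There is, however, one genuine misstep in your mean-zero reduction. Replacing $g$ by $g-\langle g\rangle\ci\cX$ \emph{does} change the pairing $\int fgw\,\dd\nu$, by $-\langle g\rangle\ci\cX\int fw\,\dd\nu$, and this does not vanish merely because $\int f\,\dd\nu=0$ (that would require $\int fw\,\dd\nu=0$, which you have no right to assume). The fix is simple and in fact removes the need for any normalization of $g$: the identity $\langle f,h\rangle\ci{L^2}=\langle\vec{S}f,\vec{S}h\rangle\ci{L^2(\ell^2)}$ already holds for every $f\in L^2_0$ and \emph{every} $h\in L^2$, mean-zero or not. Indeed $\langle f,\mathbbm{E}_0 h\rangle\ci{L^2}=0$ because $f\perp\1$ in unweighted $L^2$, while $\Delta\ci I h=\Delta\ci I(h-\mathbbm{E}_0 h)$ for all $I\in\cD$. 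Applying this with $h=gw$ directly, the rest of your argument goes through verbatim.
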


\begin{proof}
By \eqref{S-vecS-02} estimates for $S$ are reduced to estimating its ``linearized'' vector-valued version $\vec S$. Namely, it is sufficient to estimate the norm in $L^2(w)$ of the \emph{canonical} left inverse $\vec{S}^{- 1, \tmop{left}}$ of $\vec S$, 
\[
\vec{S}^{- 1, \tmop{left}}: \Ran \vec{S}\to L^2;
\]
note that since $\vec S$ is clearly an injective map, the operator $\vec{S}^{- 1, \tmop{left}}$ is well defined. Note also that there are no weights in the definition of $\vec{S}^{- 1, \tmop{left}}$.

The operator $\vec S : L^2\to L^2(\ell^2)$ (in the non-weighted situation) is an isometry, so
\begin{align*}
\vec{S}^{- 1, \tmop{left}} = \vec{S}^* \Bigm| \Ran \vec{S}. 
\end{align*}
Therefore
\begin{align}
\label{S^-1-S^*}
\| \vec{S}^{- 1, \tmop{left}} \|\ci{L^2(w;\,\ell^2)\to L^2(w)} \le \| \vec{S}^{*} \|\ci{L^2(w;\,\ell^2)\to L^2(w)}. 
\end{align}
But for an operator $\vec{S}^* : L^2(w;\,\ell^2)\to L^2(w)$ its adjoint with respect to the standard non-weighted duality is the operator $\vec{S} : L^2(w^{-1})\to L^2(w^{-1};\,\ell^2)$, so 
\begin{align*}
\| \vec{S}^{- 1, \tmop{left}} \|\ci{L^2(w;\,\ell^2)\to L^2(w)} \le \|\vec{S}\|\ci{L^2(w^{-1})\to L^2(w^{-1};\,\ell^2)}, 
\end{align*}
which immediately gives \eqref{lwr-uprBd-S}. 

In the above reasoning we skipped a trivial technical detail, namely that $\vec{S} L^2 \ne \vec{S} L^2(w)$ and we have to be a bit careful. However, it all can be fixed by a standard approximation reasoning. For example, for a finite $\cF\subset\cD$ we can define the square function $S\ci\cF$,  
\[
S\ci \cF h =\Biggl(\sum_{I\in\cF} |\Delta\ci I h|^2 \biggr)^{1/2}. 
\] 
Then for the vector version $\vec{S}\ci\cF$ we do not have a problem with ranges, so the above reasoning gives us the estimate \eqref{lwr-uprBd-S} with $S\ci\cF$ instead of $S$. Taking the supremum over all finite $\cF\subset\cD$ we get \eqref{lwr-uprBd-S}. 
\end{proof}

\subsection{A sharper way to write the lower bound for the square function}
\label{s:better lower bound}

%Let without loss of generality $\langle h \rangle_{[0, 1]} = 0$. 

Analyzing the proof of Proposition \ref{p:lwr-uprBd-S}, we can see where one could lose sharpness  of the estimate (and in some cases we indeed do lose it): we estimate the norm of the operator $\vec{S}^*$ between weighted spaces, while we need to estimate only the norm of its restriction, which could be smaller. 

We wish to
find a more convenient equivalent form of the inequality
\begin{equation}
  \| h \|\ci {L^2 (w)} \leqslant C \| S h \|\ci{L^2 (w)} \label{eq: original lower}
\end{equation}
that gives us the same constant in the estimate.

Denoting $h\ci I:=\Delta\ci I h $ the above inequality reads, with the same constant $C$ as above,
\begin{align}
\label{eq: original lower 01}
\biggl\| \sum_{I\in\tilde\cD} h\ci I \biggr\|_{L^2 (w)}
= \biggl\| \sum_{I\in\cD} h\ci I \biggr\|_{L^2 (w)}
	\leqslant C \biggl( \sum_{I\in\cD} \| h\ci I  \|^2\ci {L^2 (w)} \biggl)^{1 / 2}
	= C \biggl( \sum_{I\in\tilde\cD} \| h\ci I  \|^2\ci {L^2 (w)} \biggl)^{1 / 2} ,  
\end{align}
where we noted in the first and last sum $\tilde\cD = \{I\in\cD : h_I\neq 0\}$.

The standard approximation reasoning implies that it is sufficient to check the above inequality only for finite sums, so we do not have to worry about convergence.  

The sequence $\{h\ci I\}\ci{I\in\cD}$ is a sequence of martingale differences: this simply means that each $h\ci I =\Delta\ci I h $ for some $h$, or, equivalently, that $h\ci I$ is supported on $I$, $\int h\ci I \dd x=0$  and $h\ci I$ is
constant on all $I' \in \tmop{ch} (I)$.

The above inequality \eqref{eq: original lower 01} holds for all finite sequences $\{h\ci I\}\ci{I\in\cD}$ of martingale differences if and only the estimate 
\begin{align}
\label{eq: original lower 02}
\biggl\| \sum_{I\in\tilde\cD}  x\ci I h\ci I \biggr\|_{L^2 (w)}
\leqslant C \biggl( \sum_{I\in\tilde\cD} x^2\ci I   \| h\ci I \|^2\ci {L^2 (w)} \biggr)^{1 / 2} 
\end{align}
holds for all (finite) collections of martingale differences $h\ci I$ and real numbers $x\ci I$, $I\in\tilde\cD$. The fact that \eqref{eq: original lower 02} implies \eqref{eq: original lower 01} is trivial; on the other hand denoting $x\ci I h\ci I$ in \eqref{eq: original lower 02} by $h\ci I$ we can see that \eqref{eq: original lower 01} implies  \eqref{eq: original lower 02}. 

It looks like we just made the estimate \eqref{eq: original lower 01} more complicated, but this allows us to reduce the problem to a simple ``embedding theorem''. 
 
Namely,   for a fixed sequence $\{ h\ci I \}\ci {I\in\cD}$  of martingale differences let us define the \emph{reconstruction} operator
\[ 
R : \ell^2 =\ell^2(\tilde\cD) \rightarrow L^2 ,  \qquad R x = \sum_{I\in\tilde\cD} x\ci I h\ci I , \quad \text{where } x= \{ x\ci I \}\ci {I\in\tilde\cD}\,.
\]
With respect to the unweighted pairing, its adjoint is the operator 
\begin{align}
\label{R^*}
R^* : L^2 \rightarrow \ell^2,  \qquad R^* f = \{ (f, h\ci I)\ci {L^2} \}\ci {I\in\tilde\cD}  . 
\end{align}
Define $\gamma=\{\gamma\ci I\}\ci{I\in\tilde\cD}= \Bigl\{\|h \ci I\|\ci{L^2(w)}^2\Bigr\}_{I\in\tilde\cD}$\,, and the norm in the weighted space $\ell^2(\gamma)$ is given by 
\begin{align*}
\|x\|\ci{ \ell^2(\gamma) }^2  = \sum_{I\in\tilde\cD} x\ci I^2 \gamma\ci I. 
\end{align*}
The estimate \eqref{eq: original lower 02} can be rewritten as 
\begin{align*}
\|R x \|\ci{L^2(w)} \le C \|x\|\ci{\ell^2(\gamma)}.
\end{align*}

But that is equivalent to the weighted estimate 
\begin{align}
\label{eq: R lower 02}
\|R \|\ci{\ell^2(\gamma)\to L^2(w)} \le C 
\end{align}

For the operator $R:\ell^2 (\gamma)\to L^2(w)$ its adjoint with respect to the standard non-weighted duality is the operator 
\begin{align*}
R^* : L^2(w^{-1}) \to \ell^2(\gamma^{-1})
\end{align*}
where $\gamma^{-1}= \{\gamma\ci I^{-1}\}\ci{I\in\tilde\cD}$, and $R^*$ is the adjoint of the operator $R$ in the non-weighted situation ($R:\ell^2\to L^2$, $R^*:L^2\to\ell^2$) given by \eqref{R^*}.

The inequality \eqref{eq: R lower 02} (and so \eqref{eq: original lower 02}) rewritten for the adjoint operator thus becomes
\[ 
\sum_{I\in\tilde\cD} \frac{(f, h\ci I)\ci {L^2}^2}{\gamma\ci I} \leqslant C^2 \int^1_0 | f |^2
   w^{- 1} \]
and writing $f = g w$ we can restate it as
\begin{align}
\label{Weighted Embedding Haar 01}
\sum_{I\in\tilde\cD} \frac{(g, h\ci I)^2\ci {L^2 (w)}}{\gamma\ci I} \leqslant C^2 \int^1_0 | g |^2
   w. 
\end{align}

Let us simplify the estimate \eqref{Weighted Embedding Haar 01} a bit more. 
Consider the weighted Haar functions $h\ci I^w$, 
\[ 
h\ci I^w = h\ci I - d\ci I \1\ci I ,  
\]
where $d_I$ is the unique constant such that  $h^w_I  \bot \1\ci I$ in $L^2
(w)$. Thanks to orthogonality   we have by Pythagorean theorem the estimate
$\| h^w\ci I  \|\ci {L^2 (w)} \leqslant \| h\ci I \|\ci {L^2 (w)}$. Notice further that with this choice
of Haar functions, we have $\tilde \cD = \{I\in\cD; \tmop{ch}(I)\neq I\}$.
In particular, if $\cD$ is the usual dyadic or $n$-adic filtration, then $\tilde\cD=\cD$.
This is the situation we will consider in the counterexamples built in the next sections.

In order to estimate the  sum in \eqref{Weighted Embedding Haar 01}, it suffices to estimate the terms
\[ 
\sum_{I\in\tilde\cD} \frac{(g, h^w\ci I)\ci {L^2 (w)}^2}{\gamma\ci I} \qquad \tmop{and}\qquad \sum_{I\in\tilde\cD}
   \frac{d^2\ci I  (g, \1\ci I)\ci {L^2 (w)}^2}{\gamma\ci I} . 
 \]
The first sum is easily estimated by the Pythagorean theorem:
\begin{align}
\notag
\sum_{I\in\tilde\cD} \frac{(g, h^w\ci I)\ci {L^2 (w)}^2}{\gamma\ci I} & = \sum_{I\in\tilde\cD} \frac{(g, h^w\ci I)\ci {L^2 (w)}^2}{\| h\ci I\|\ci{L^2(w)}^2}
\\ 
\label{Triv sum}
& \le \sum_{I\in\tilde\cD} \frac{(g, h^w\ci I)\ci {L^2 (w)}^2}{\| h\ci I^w\|\ci{L^2(w)}^2} \le \|f\|\ci{L^2(w)}. 
\end{align}

The second sum can be rewritten as 
\begin{align*}
\sum_{I \in \tilde\cD} \frac{d^2\ci I \langle gw \rangle^2\ci I | I
   |^2}{\gamma\ci I}
\end{align*}
and 
by the martingale Carleson Embedding theorem, it suffices to check its bounds on functions $g=\1\ci J$, $J\in\tilde\cD$. 

Namely, this sum is bounded by  $C_1^2 \|f\|\ci{L^2(w)}^2$ if and only if  for all $J\in\tilde\cD$
\begin{align}
\label{eq: modified testing 01}
\frac{1}{| J |} \sum_{I \in\tilde\cD(J)} \frac{d^2\ci I \langle w \rangle^2\ci I | I
   |^2}{\gamma\ci I} \leqslant C^2_2 \langle w \rangle\ci J\,.   
\end{align}
Combining this estimate with \eqref{Triv sum} and using the triangle inequality for the $\ell^2$ norm, we get that \eqref{eq: modified testing 01} holds if and only if
\begin{equation}
\label{eq: modified testing}
  \frac{1}{| J |} \sum\ci {I \in\tilde\cD( J)} \frac{(w, h\ci I)\ci {L^2}^2}{\gamma\ci I}
  \leqslant C^2_3 \langle w \rangle\ci J \qquad \forall J\in\tilde\cD.   
\end{equation}
Moreover, we can see that the best constants in inequalities \eqref{eq:
original lower}, \eqref{eq: modified testing 01} and \eqref{eq: modified testing} are equivalent.

\section{Counterexample for the \texorpdfstring{$A_2$}{A2} lower bound.}

In this section, we will prove Theorem \ref{thm: A2 classical}; note that it is sufficient to prove this theorem for sufficiently large $A$. 

We will first construct a non-homogeneous dyadic filtration on $I_0=[0,1]$ and a weight $w$ with  $[w]\ci{2, \tmop{cl}} =A$ such that for the best constant $C_3$ in \eqref{eq: modified testing} we have for this filtration $C_3 \gtrsim  [w]\ci{2, \tmop{cl}}$. More precisely, we will prove the estimate 
\begin{align}
\label{eq: testing counterexample}
\sum_{I \in\cD(I_0)} \frac{(w, h\ci I)\ci {L^2}^2}{\| h\ci I\|\ci{L^2(w)}^2}
  \gtrsim A^2\langle w \rangle\ci {I_0} \,.
\end{align}

Then later in Section \ref{ss: classical A_2} we will show that the weight $w$ we constructed belongs to the classical $A_2$ class, and that  $[w]\ci{2, \tmop{cl}}\asymp [w]\ci{2, \cD}$, which completely proves Theorem \ref{thm: A2 classical}. 

Note, that since our filtration is dyadic, all martingale difference subspaces $\Delta\ci I L^2$ are one-dimensional, so the Haar functions $h\ci I$ are uniquely defined up to a factor. Due to homogeneity of each term in \eqref{eq: testing counterexample} a choice of the factor does not matter. 

%\subsection{Dyadic counterexample}

\subsection{Preliminary computations and idea of the proof}
For an interval $I\in\cD$ let $I_+$ and $I_-$ be its children, and let 
\[
\alpha_\pm^I := |I_\pm|/|I|. 
\]
The corresponding Haar function $h\ci I$ is given (up to a constant factor) by 
\[
h\ci I = \alpha_-^I \1\ci{I_+} - \alpha_+^I \1\ci{I_-}. 
\]
Then 
\[
(w, h\ci I)\ci{L^2} = \alpha_+^I\alpha_-^I \left(\langle w
   \rangle\ci{I_+} - \langle w \rangle\ci{I_-}\right) |I|, 
\]
and 
\[
\|h\ci I \|\ci{L^2(w)}^2 = \alpha_+^I\alpha_-^I \left( \alpha^I_- \langle w
   \rangle\ci {I_+} + \alpha^I_+ \langle w \rangle\ci {I_-} \right) |I|, 
\]
so the left hand side in \eqref{eq: testing counterexample} is given by 
\begin{align}
\label{eq: testing sum}
\sum_{I \in \cD(I_0)} \frac{\alpha^I_- \alpha^I_+ \left(\langle w
  \rangle_{I_+} - \langle w \rangle_{I_-}\right)^2}{\alpha^I_- \langle w
  \rangle_{I_+} + \alpha^I_+ \langle w \rangle_{I_-}} | I | .
\end{align}

\subsubsection{Idea of the construction}
Assume we have for a term in the sum \eqref{eq: testing sum}  $\alpha_-\ll \alpha_+$  (and in particular $\alpha_-^I\le 0.1$, so $\alpha_+^I\ge 0.9$). Assume also for this term $\alpha^I_- \langle w \rangle_{I_+} \approx  \alpha^I_+ \langle w \rangle_{I_-}$ so $\langle w \rangle_{I_+} - \langle w \rangle_{I_-}\gtrsim \langle w \rangle_{I}$,
and let also $ \langle w \rangle\ci I  | I |  \gtrsim \langle w \rangle\ci {I_0}  |I_0|$. Then term we have
\begin{align*}
\frac{\alpha^I_- \alpha^I_+ \left(\langle w
  \rangle_{I_+} - \langle w \rangle_{I_-}\right)^2}{\alpha^I_- \langle w
  \rangle_{I_+} + \alpha^I_+ \langle w \rangle_{I_-}} | I | \gtrsim \langle w \rangle\ci I  | I |
  \gtrsim \langle w \rangle\ci {I_0}  |I_0| .
\end{align*}
If we are able to find as many as $A^2$ such intervals, we will prove \eqref{eq: testing counterexample}, and therefore also Theorem \ref{thm: A2 classical}.

So let us construct a (non-homogeneous) dyadic filtration $\cD$ and a weight $w\in A_2$ such that $[w]\ci{2, \tmop{cl}} =A$ such that we have sufficient number of terms as we described above. 

In the construction we first show that $[w]\ci{2, \cD} =A$, and later prove that the classical $A_2$ characteristic remains the same. 

\subsubsection{A random walk representation}
To construct a weight we will use its \emph{martingale representation} i.e.~get the weight from a random walk in the domain $\Omega\ci A\subset \R^2$, 
\begin{align*}
\Omega\ci A:= \{ (u,v)\in \R^2 : \, 1\le uv\le A\}. 
\end{align*}
Namely, suppose for each $I\in\cD$ we have a point $X\ci I = (u\ci I, v\ci I) \in \Omega\ci A$, and the points $X\ci I$ satisfy a (non-homogeneous) \emph{martingale dynamics}, 
\begin{align}
\label{eq: martingale dynamics}
X\ci I = \alpha_+^I X\ci{I_+}+\alpha_-^I X\ci{I_-};
\end{align}
here recall $\alpha_\pm^I = |I_\pm|/|I|$. 

This collection of points $X\ci I$ can be interpreted as as a non-homogeneous random walk in $\Omega\ci A$, where we move from a point $X\ci I$ to points $X\ci{I_\pm}$ with probabilities $\alpha_\pm^I$ respectively. 

In our example the walk will be stopped after $n$ steps on the lower boundary $uv=1$ of $\Omega\ci A$, meaning that for all $I\in \ch^{k} I_0$, $k> n$ we have
\begin{align*}
u\ci I v\ci I = 1. 
\end{align*}
\begin{rem*}
Note that when the walk hits the lower boundary $uv=1$ of $\Omega\ci A$ it must stay there; it is immediate corollary of the martingale dynamics \eqref{eq: martingale dynamics} and the requirement that one must stay above the hyperbola $uv=1$. 
\end{rem*}

Such a walk immediately gives us a weight $w\in A_2^\cD$. Namely, take the level $N$ where the walk is stopped on the hyperbola $uv=1$, and define
\begin{align*}
w:= \sum_{I\in\ch^N I_0} u\ci I \1\ci I \,.
\end{align*}

The martingale dynamics \eqref{eq: martingale dynamics}
together with the fact that $u\ci I v\ci I = 1$ for all $I\in \ch^N(I_0)$ imply that for any $I\in\cD$
\begin{align}
\label{eq:w from uv}
\langle w\rangle\ci I = u\ci I, \qquad \langle w^{-1}\rangle\ci I = v\ci I\,.
\end{align}
Since $X\ci I\in \Omega\ci A$, identities \eqref{eq:w from uv} mean that $[w]\ci{2,\cD} \le A$; if we, for example start the walk at a point on the upper hyperbola $uv=A$, then trivially $[w]\ci{2,\cD} = A$. 

\subsection{The construction}
Let us construct the non-homogeneous dyadic filtration and the corresponding random walk in $\Omega\ci A$, which gives us the weight $w$ as follows. 

\subsubsection{Setting up the random walk}
We restrict our attention to the one dimensional dyadic setting. Let $I_0=[0,1]$. The dyadic filtration $\cD(I_0)$ is such that each $I\in\cD$ has exactly 2 children, $I_+$ and $I_-$, with equal Lebesgue measure $\lambda(I_-)=\lambda(I_+)=\lambda(I)/2$. However, with respect to the non homogeneous measure $\nu$, we have $\nu(I\pm)\assign |I_\pm |  \assign \alpha_\pm^I |I|$, and we will be choosing  the probabilities $\alpha_\pm^I$ in order to completely define the dyadic lattice.

For easier bookkeeping let $I_+$ always be on the right, and let $|I_+|\ge|I_-|$.  

We start from the interval $I_0=[0,1]$, and pick a point $X_0=X\ci{I_0}=(u_0,v_0)$ on the upper hyperbola $uv=Q_0=A$.  We will then construct the random walk in such a way, that at each interval $I$ anything interesting can happen only on its right part $I_+$; on the left part $I_-$ the walk stops on the lower hyperbola $uv=1$. Because we are stopped on the lower hyperbola, it does not matter how we continue the filtration $\cD$ on $I_-$; we can, for example continue it as the standard dyadic filtration. 

So, we start from the interval $I_0$, and anything interesting will happen only on its right part  
$(I_0)_+ =: I_1$, because the walk will stop on $(I_0)_- =: I_1^\star$. We then split the interesting interval $I_1$ into two parts $I_2:= (I_1)_+$ and $I_2^\star:= (I_1)_-$, so again on $I_2^\star$ the walk stops, and so on\ldots 

So, we will only need to keep track of what is going on on intervals $I_k$, $I_k^\star$, $k\ge 1$
\begin{align*}
I_{k+1}:= (I_k)_+, \quad I_{k+1}^\star := (I_k)_-, \qquad k\ge 0. 
\end{align*}
Denoting for simplification of notation the corresponding probabilities $\alpha_\pm^I$ by $\alpha_k$ and $\alpha_k^\star$,  we write 
\begin{align*}
|I_{k+1}| = \alpha_k|I_{k}|, \quad | I_{k+1}^\star | = \alpha_k^\star |I_{k}|, \qquad k\ge 0
\end{align*}
(clearly $\alpha_k+\alpha_k^*=1$); the values of $\alpha_k$, $\alpha_k^\star$ will be chosen later.

The points $X_k =(u_k, v_k)$, $X_k^\star =(u_k^\star, v_k^\star)$ of our walk must satisfy the martingale dynamics \eqref{eq: martingale dynamics}, which in our notation can be rewritten as 
\begin{align}
\label{eq: martingale dynamics 01}
X_k = \alpha_k X_{k+1} + \alpha_k^\star X_{k+1}^\star. 
\end{align}

Schematically, the random walk we need to track can be presented in the picture below. 

\

\begin{center}
  \includegraphics[width=10cm]{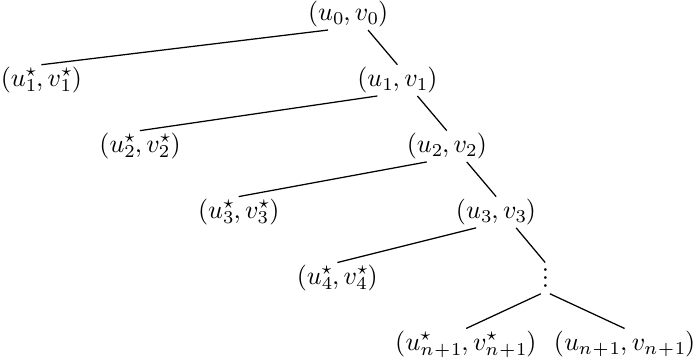}
\end{center}

\subsubsection{Inductive construction}
We start from a point $X_0=(u_0, v_0)$, $u_0v_0=Q_0:=A$, and construct the the walk by induction. Suppose we constructed the points $X_1, X_2, \ldots, X_k$, and $X_1^\star, X_2^\star, \ldots, X_k^\star$, and let $Q_k:= u_k v_k$. We will continue our iterations as long as $Q_k\ge Q_0/2$; if $Q_k< Q_0/2$  we stop the walk by moving from the point $X_k$ to the both points being on the lower hyperbola $uv=1$. 

If $Q_k \ge Q_0/2$ we set 
\begin{align}
\label{eq: alpha_k}
\alpha_k^\star = 1/Q_k, \qquad \alpha_k = 1- \alpha_k^\star .
\end{align}
The point $X_{k+1}^\star$ is defined as the point of intersection of the tangent line to the hyperbola $uv=Q_k$ at the point $X_k=(u_k,v_k)$ and the lower hyperbola $uv=1$. The computations show
\begin{align*}
u_{k+1}^{\star} = \left(1 - \sqrt{1 - 1/Q_k}\right) u_k, \qquad v_{k+1}^{\star} =
   \left(1 + \sqrt{1 - 1/Q_k}\right)v_k ;
\end{align*}
probably the easiest way to compute is to do first the computations for the case $u_k =v_k =Q_k^{1/2}$ and then do the rescaling $u\mapsto \lambda u $, $v\mapsto \lambda^{-1} v$   for an appropriate $\lambda$. 

It follows from the martingale dynamics \eqref{eq: martingale dynamics 01} that 
\begin{align*}
u_{k+1} & = \left( 1 + \frac{\alpha_k^{\star}}{\alpha_k} \sqrt{1 - 1 /
   Q_k} \right)  u_k , \qquad & v_{k+1} &=  \left( 1 -
   \frac{\alpha_k^{\star}}{\alpha_k} \sqrt{1 - 1 / Q_k} \right) v_k, \\
 & = \left( 1 + \alpha_k^{\star} {\alpha_k}^{-1/2} \right)  u_k,  & & = \left( 1 - \alpha_k^{\star} {\alpha_k}^{-1/2} \right)  v_k.
\end{align*}

The figure below shows an example of a dyadic martingale as above with $X_k=(u_k,v_k)$ with $0\leqslant k \leqslant 4$, $X_k^\star=(u_k^\star,v_k^\star)$, with $1\leqslant k\leqslant 3$. Only $X_0$, $X_1$ and $X_0^\star$ are labelled. The two hyperbolas are $u v = 1$ and $u v = Q_0=A$. All the points lie in the domain $\Omega_A$.
\begin{center}
  \includegraphics[width=10cm]{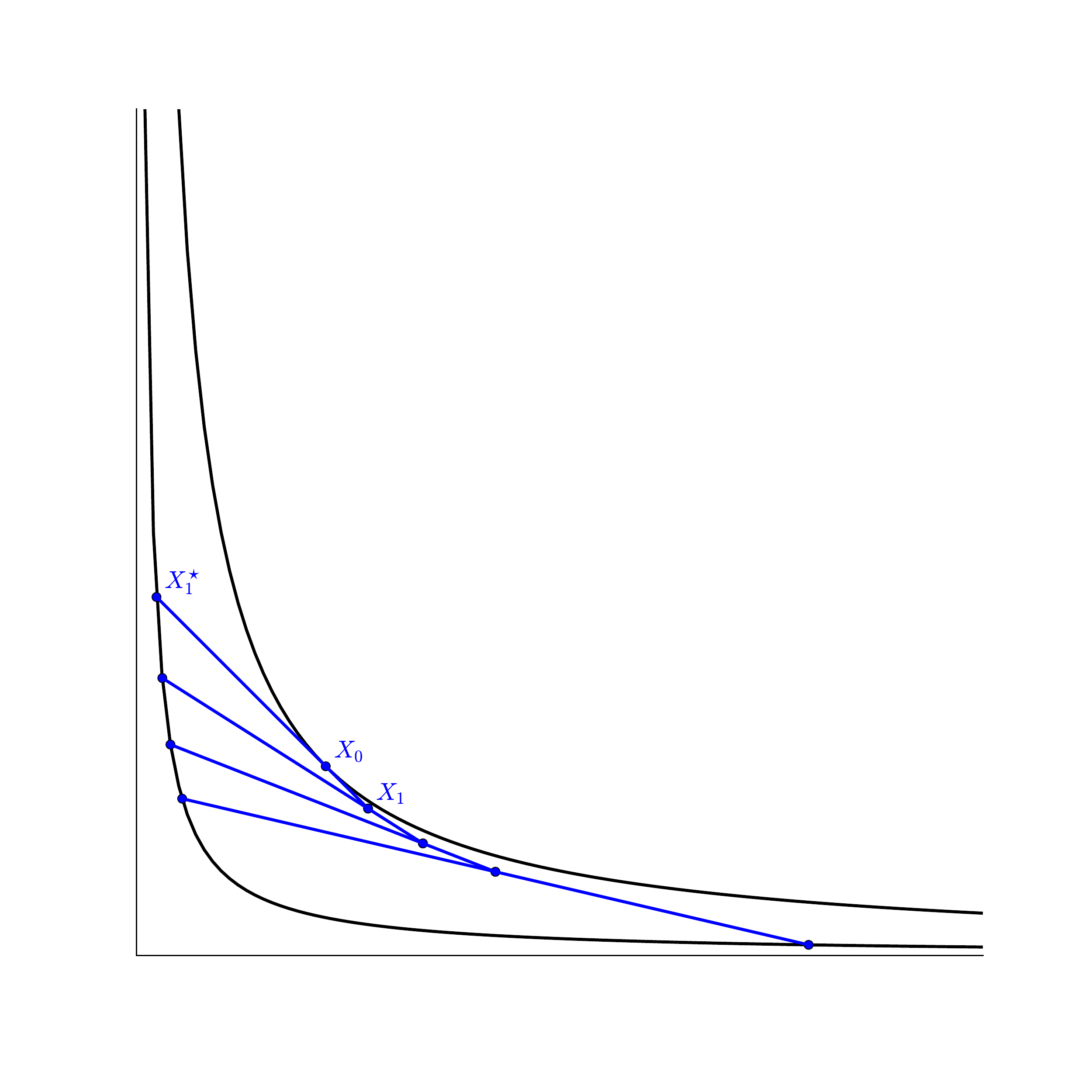}
\end{center}

\subsubsection{The estimates}
Let us now write some estimates. Let us assume that $Q_0=A\ge 4$, so $Q_k \ge A/2=Q_0/2 \ge 2$. Then 
\begin{align*}
u_{k+1} - u_{k+1}^\star & \ge u_k - u_{k+1}^\star = u_k\sqrt{1-1/Q_k} \ge u_k/\sqrt2, \\
  \alpha_k^\star u_{k+1} + \alpha_k u_{k+1}^\star  & = \left[\alpha_k^\star (1 + \alpha_k^\star \alpha_k^{-1/2}) + \alpha_k (1-\alpha_k^{1/2}) \right] u_k 
\\ 
& \le   \left[\alpha_k^\star (1 + \alpha_k^\star \alpha_k^{-1/2}) + \alpha_k^\star \alpha_k \right] u_k \lesssim \alpha_k^\star u_k.
\end{align*}

Combining the above estimates together we get that 
\begin{align}
\label{eq:gtrsim u_k I_k}
\frac{\alpha_k\alpha_k^\star (u_{k+1} - u_{k+1}^\star)^2}{\alpha_k^\star u_{k+1} + \alpha_k u_{k+1}^\star  } |I_k| \gtrsim u_k |I_k| 
\end{align}

Using formulas for $u_{k+1}$ and $u_{k+1}^\star$ we get that 
\begin{align}
\notag
Q_{k+1} & = \left( 1 + \alpha_k^{\star} {\alpha_k}^{-1/2} \right)\left( 1 - \alpha_k^{\star} {\alpha_k}^{-1/2} \right) Q_k \\
\notag
&= \left( 1 - Q_k^{-2} (1-1/Q_k)^{-1}\right) Q_k \\
\label{eq:Q_{k+1}}
& \ge \left( 1 - 2 Q_k^{-2} \right) Q_k \ge \left( 1 - 8 Q_0^{-2} \right) Q_k \, .
\end{align}
Finally, since $u_{k+1} = (1+\alpha_k^\star \alpha_k^{-1/2}) u_k$ we get 
\begin{align}
\notag
u_{k+1} |I_{k+1}| & = (1-\alpha_k^\star) (1+\alpha_k^\star \alpha_k^{-1/2}) u_k |I_k| \\
\notag
& \ge \left(1- (\alpha_k^\star)^2 \right) u_k |I_k|  = \left(1- 1/Q_k^2 \right) u_k |I_k| \\
\label{eq:u_{k+1}I_{k+1}}
& \ge \left(1- 4/Q_0^2 \right) u_k |I_k| .
\end{align}

The estimate \eqref{eq:Q_{k+1}} implies that 
\begin{align*}
Q_k \ge \left( 1 - 8 Q_0^{-2} \right)^k Q_0 , 
\end{align*}
so for $n\gtrsim Q_0^2$ steps we will have $Q_k\ge Q_0/2$, $k\le n$. Finally, it follows from \eqref{eq:u_{k+1}I_{k+1}} that
\begin{align*}
u_k |I_k| \ge \left(1- 4/Q_0^2 \right)^k u_0 |I_0|,
\end{align*}
therefore $u_k |I_k| \ge \frac12 u_0 |I_0|$ for $k\le n$. 
From \eqref{eq:gtrsim u_k I_k} we get that for $k\le n$
\begin{align*}
\frac{\alpha_k\alpha_k^\star (u_{k+1} - u_{k+1}^\star)^2}{\alpha_k^\star u_{k+1} + \alpha_k u_{k+1}^\star  } |I_k| \gtrsim u_0 |I_0| 
\end{align*}
\subsubsection{Finishing the random walk} 
\label{ss:finishing walk}
First of all let us note that in our construction not only the points $X_k$, $X_k^\star$, but the whole interval $[X_k,X_k^\star]$  are in the domain $\Omega\ci A$. That will be needed in proving that the weight $w$ we constructed satisfies the classical $A_2$ condition and that $[w]\ci{2,\cD} = [w]\ci{2,\tmop{cl}}$. 

Note also that the following follows immediately from the construction:
\begin{enumerate}
\item The sequence $u_k$ is increasing, the sequence $v_k$ is decreasing. 
\item The sequence $Q_k$ is decreasing. 
\item The slopes of intervals $[X_k^\star, X_k]$ are negative and increasing (i.e.~have decreasing absolute values). 
\end{enumerate}

In our construction we made $n$ steps while $Q_k\ge Q_0/2$. Now we need to stop the process by moving from $X_n$ to the points $X_{n+1}$, $X_{n+1}^\star$ on the lower hyperbola $uv=1$. Note that we can easily do it preserving the above properties \cond1--\cond3; recall that we have a  choice of transition probabilities $\alpha_n$, $\alpha_n^\star$.

\subsection{Why the constructed weight belongs to  classical \texorpdfstring{$A_2$}{A2}}
\label{ss: classical A_2}

It is of independent interest to observe that even classical $A_{2}^
{\tmop{cl}}$, containing many more intervals as competitors, is not sufficient
for a square root bound. We will show that the example above indeed belongs to the classical $A_2$ and that $[w]\ci{2,\cD} = [w]\ci{2,\tmop{cl}}$. 

The following argument is borrowed from \cite{IOSVZ}. Let $X:I_0\to \R^2$ be a vector-valued function, $X(t) = (w(t), w(t)^{-1})$. 

Consider the trajectory
\[ 
\gamma (t) \assign 
\left\langle X
   \right\rangle_{[t, 1]} , \qquad t \in I_0=[0, 1] . \]
Notice that $\gamma (0) = (w_0, v_0)$ is the starting point. Let $\beta_k$ be
the left endpoint of the interval $I_k$, then
\begin{align}
\label{eq:endpoints of I_k}
\gamma (\beta_k) = (1-\beta_k) X_k , \qquad X_k =(u_k, v_k).  
\end{align}
Since the weight is constant on the interval $I_{k + 1} \setminus I_k$ we see that on this interval 
the trajectory of $\gamma(t)$ in the $uv$ plane  is exactly  the line segment joining the points $X_{k}$ and $X_{k+1}$ (note that this segment is the part of the interval $[X_k^\star, X_k]$).

Indeed, since both $w$ and $w^{-1}$ are constant on $I_{k+1}\setminus I_k$, both $u$ and $v$ coordinates of $\gamma(t)$ have a form 
\begin{align*}
\frac{a+bt}{1-t} = \frac{a+b}{1-t} - b, 
\end{align*}
so both coordinates are affine functions of the variable $s=1/(1-t)$. Therefore the trajectory indeed lies on a line segment. The monotonicity of the change of variables  $s=1/(1-t)$ together with \eqref{eq:endpoints of I_k} insure that this segment is exactly $[X_k, X_{k+1}]$.

Clearly the trajectory of $\gamma (t)$ is
convex (increasing slopes, see \cond3 in Section \ref{ss:finishing walk} above),  piecewise linear, and  it belongs to the domain 
\[ 
\Omega_{A} \assign \{(u, v) \in \mathbb{R}^2 : 1 \leq uv \leq A \} . 
\]
The line segments at the endpoints of the curve $\gamma$ if extended to the
line liees below the graph $uv = A$ (here we should agree that on the final
interval $I_n$ we concatenated the weight along the line segment not
intersecting the previous line segments and the boundary $uv = A$).

Take arbitrary $1 \geq b > a \geq 0$. Since
\[ \gamma (a) = \frac{1 - b}{1 - a} \cdot \gamma (b) + \frac{b - a}{1 - a}
   \cdot \langle X\rangle_{[a, b]}, \]
it follows from a simple geometry that $\left\langle X \right\rangle_{[a, b]} \in \Omega_{Q_0}$.
The figure below illustrates the equation above. Notice that the segment $[ \langle X\rangle_{[a, b]}, \gamma (a)]$
lies below the convex curve $\gamma(t)$ and below its tangent at $t=0$. This ensures that  $\langle X\rangle_{[a, b]}$
belongs to $\Omega_A$.

\

\begin{center}
  \includegraphics[width=10cm]{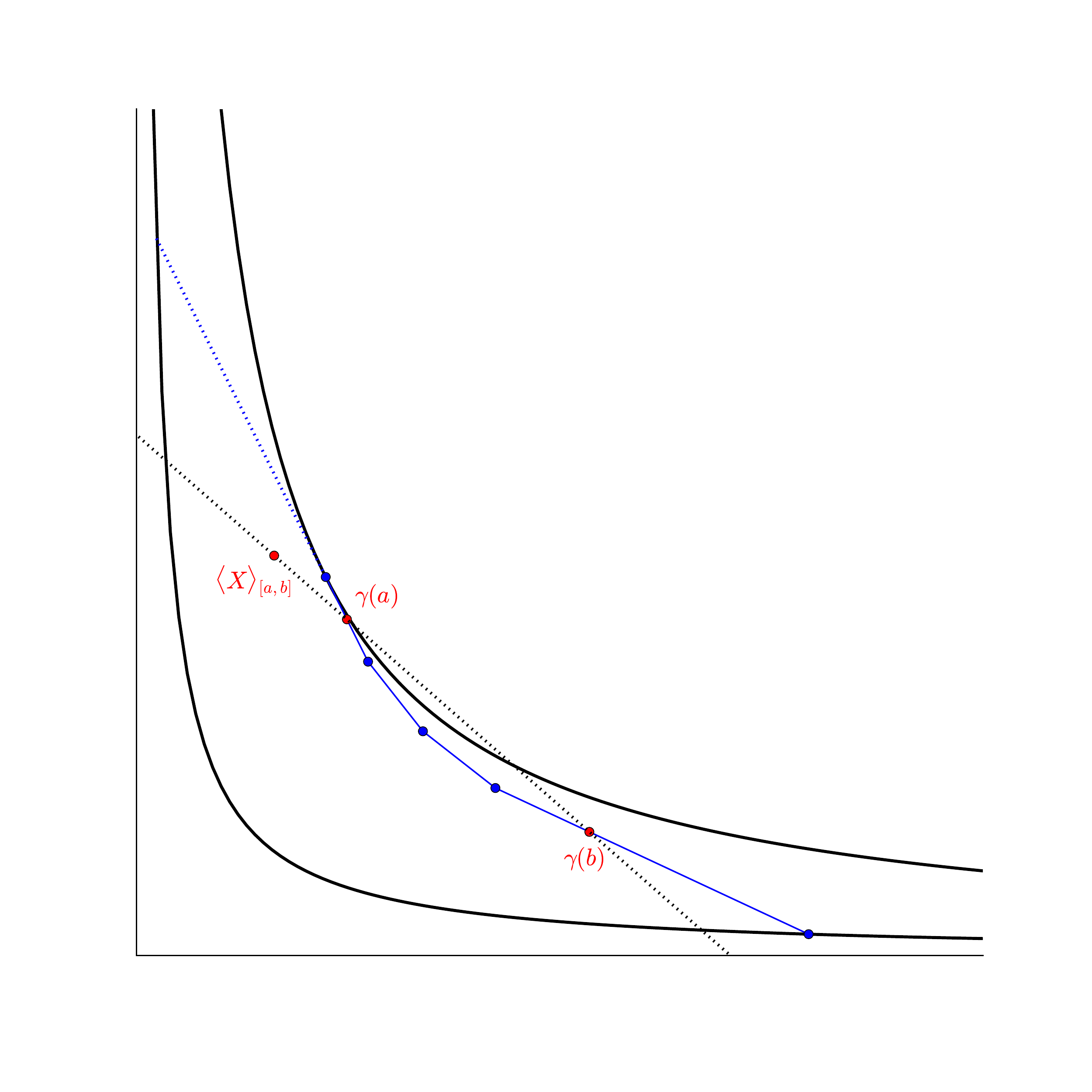}
\end{center}

\section{No bounds in terms of \texorpdfstring{$A_{\infty}$}{A<sub>infty}}

In this section we prove Theorem \ref{thm: Ainfinity lower}. We show that in
the non-homogeneous setting, if $[w]_{\infty,{\tmop{cl}}} < \infty$ then we
can choose a filtration so that the sum
\[ 
\frac{1}{| J |} \sum_{I \subseteq J} \frac{\alpha^I_+ \alpha^I_-  (\langle
   w \rangle\ci {I_+} - \langle w \rangle\ci {I_-})^2}{\alpha^I_+  \langle w
   \rangle\ci {I_-} + \alpha^I_-  \langle w \rangle\ci {I_+}} |I| 
\]
can be very large (so no bound in terms of $A\ci {\infty, \tmop{cl}}$
characteristics can be obtained).

Indeed, Take $w (x) = x$ on $[0, 1]$. It is not difficult to check that
$[w]\ci {{\infty},{\tmop{cl}}}$ is finite. Let $\varepsilon > 0$ be a
sufficiently small number (we will specify it later). We will construct the
filtratrion as follows (parent $\to$ children)

\begin{align*}
   I_0 &:= [0, 1] ; \qquad  &I_0^- & \assign [0, \varepsilon], \hspace{1em}
  &I_0^+ &:= [\varepsilon, 1] ;\\
   I_1 &\assign I_0^+ ;  &I_1^- &\assign [\varepsilon, 2
  \varepsilon], \hspace{1em} & I_1^+ &\assign [2 \varepsilon, 1] ;
  \\
   \ldots\\
   I_{k - 1} &:= [(k - 1) \varepsilon, 1] ; &I_{k - 1}^- &:= [(k -
  1) \varepsilon, k \varepsilon], \hspace{1em} &I_{k - 1}^+ &\assign [k
  \varepsilon, 1]
\end{align*}

Then
\begin{align*}
   \langle w \rangle_{I_{k - 1}^-} &= \frac{\varepsilon (2 k - 1)}{2} ; \qquad &
   \langle w \rangle_{I_{k - 1}^+} & = \frac{1 + \varepsilon k}{2} ;\\
  \alpha_{I_{k - 1}^-} &\assign \frac{\varepsilon}{1 - \varepsilon (k - 1)}
  ; 
  & \alpha_{I_{k - 1}^+} &\assign \frac{1 - \varepsilon k}{1 - \varepsilon (k
  - 1)} .
\end{align*}

Let's say we make $N$ steps. Then
\begin{align*}
   \sum_{k = 1}^N \frac{\alpha_{I_{k - 1}^-} \alpha_{I_{k - 1}^+} 
  (\langle w \rangle_{I_{k - 1}^-} - \langle w \rangle_{I_{k -
  1}^+})^2}{\alpha_{I_{k - 1}^-} \langle w \rangle_{I_{k - 1}^+} +
  \alpha_{I_{k - 1}^+} \langle w \rangle_{I_{k - 1}^-}} |I_{k - 1} | 
  = \frac{1}{2}  \sum_{k = 1}^N \frac{(1 - \varepsilon k)  (1 -
  \varepsilon (k - 1))^2}{(1 + \varepsilon k) + (1 - \varepsilon k)  (2 k -
  1)} .
\end{align*}

Choose $\varepsilon = \frac{1}{N}$. Then
\begin{eqnarray*}
  \frac{1}{2}  \sum_{k = 1}^N \frac{(1 - \varepsilon k)  (1 - \varepsilon (k -
  1))^2}{(1 + \varepsilon k) + (1 - \varepsilon k)  (2 k - 1)} & \geqslant &
  \frac{1}{8}  \sum_{k = 1}^N \frac{(1 - k / N)^3}{k}\\
  & \geqslant & \frac{1}{8}  \sum_{k = 1}^N \frac{1 - 3 k / N}{k}\\
  & \geqslant & \frac{1}{8}  (\ln (N - 1) - 3)
\end{eqnarray*}
and it becomes very large as $N \to \infty$.

\

\section{Estimate in terms of martingale \texorpdfstring{$A_{\infty}^\cD$}{A<sub>infty} for 
%$n$-adic and 
homogeneous filtrations}

In this section we prove Theorem \ref{thm:
Ainfinity n adic}. 

Since everything scales correctly, we can assume without loss of generality that the starting interval $I_0$ of our filtration is $I_0=[0,1]$. 
 
Let \ $\mathcal{D}=\cD(I_0)$
denote all $n$-adic intervals $I\subset I_0$. 

\subsection{Bellman functional and its properties}
For a non-negative function $w$ on an interval $I$ let $N=N_I^w$ be its \emph{normalized} distribution function, 
\begin{align}
\label{eq:NormDistrFn}
N_I^w (t):= |I|^{-1}\left|\left\{ x\in I : w (x)>t\right\}\right|, \qquad t\ge 0, 
\end{align}
Trivially the normalized distribution function $N_I^w$ satisfies the \emph{martingale dynamics}, namely, if $I_k$ are the children of $I$, then 
\begin{align*}
N_I^w= \sum_k \alpha_k N_{I_k}^w, \qquad \text{where } \alpha_k = |I_k|/|I|. 
\end{align*}

On the set of distribution functions consider the Bellman functional
\[ B (N) = \int^{\infty}_0 \psi (N (t)) \mathd t \]
with $\psi (s) = s - s \ln (s)$. % and where $0 \leqslant N (t) \leqslant 1$.

We will need the following well-known fact, see \cite[Theorem IV.6.7]{Bennett-Sharpley_1988}.
\begin{lm}
\label{l:LlogL}
Let $w$ be a non-negative function on $I_0=[0,1]$ and let $N=N_{I_0}^w$ be its distribution function. Then $\|M_{I_0} w\|\ci{L^1}$ and $B(N)$ are equivalent in the sense of two-sided estimates (with some absolute constants). 
\end{lm}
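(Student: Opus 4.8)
The plan is to reduce the statement to a one-variable rearrangement computation. Write $w^*$ for the decreasing rearrangement of $w$ on $I_0=[0,1]$, characterized by $w^*(u)=\inf\{t\ge 0:N(t)\le u\}$; then $N$ and $w^*$ are mutual inverses, and in particular $|\{t\ge 0:N(t)>u\}|=w^*(u)$ for a.e.\ $u\in(0,1)$. Since $\psi(s)=s-s\ln s$ satisfies $\psi'(s)=\ln(1/s)$ and $\psi(0)=0$, we have $\psi(s)=\int_0^s\ln(1/r)\,\mathd r$, so by Tonelli
\[
B(N)=\int_0^\infty\!\!\int_0^{N(t)}\ln(1/r)\,\mathd r\,\mathd t=\int_0^1\ln(1/r)\,\bigl|\{t:N(t)>r\}\bigr|\,\mathd r=\int_0^1 w^*(r)\ln(1/r)\,\mathd r,
\]
and a second Tonelli gives $B(N)=\int_0^1\bigl(\tfrac1u\int_0^u w^*(r)\,\mathd r\bigr)\mathd u=\int_0^1 w^{**}(u)\,\mathd u$, where $w^{**}(u):=\tfrac1u\int_0^u w^*$. (Both sides may be $+\infty$, simultaneously.) This is the only genuinely new ingredient.

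Next I would identify $\|M\ci{I_0}w\|\ci{L^1}$ with $\int_0^1 w^{**}$. Since $I_0$ is an interval, for $x\in I_0$ the localized maximal average $M\ci{I_0}w(x)$ coincides with the (global, uncentered) Hardy--Littlewood maximal function of $w\1\ci{I_0}$, hence $\|M\ci{I_0}w\|\ci{L^1(I_0)}=\int_0^1(M\ci{I_0}w)^*(u)\,\mathd u$, and it remains to invoke Herz's pointwise rearrangement equivalence $(M\ci{I_0}w)^*(u)\asymp w^{**}(u)$ on $(0,1)$, with absolute constants. The upper bound $(M\ci{I_0}w)^*(u)\lesssim w^{**}(u)$ is easy and follows from Riesz's rising--sun lemma applied inside $I_0$, which yields $|\{M\ci{I_0}w>\lambda\}|\le \tfrac{C}{\lambda}\int_{\{w>\lambda\}}w$ for every $\lambda$. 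For the reverse inequality one uses the same rising--sun decomposition to get, for $\lambda\ge\langle w\rangle\ci{I_0}$, disjoint subintervals $J_i\subseteq I_0$ with $\langle w\rangle\ci{J_i}\asymp\lambda$ and $\bigcup_iJ_i\supseteq\{w>\lambda\}$ a.e., so $|\{M\ci{I_0}w>\lambda\}|\ge\sum_i|J_i|\gtrsim\tfrac1\lambda\int_{\{w>\lambda\}}w$; integrating this in $\lambda$ and using Tonelli gives $\|M\ci{I_0}w\|\ci{L^1}\gtrsim\int_{\{w>\langle w\rangle\ci{I_0}\}}w\,\ln\tfrac{w}{\langle w\rangle\ci{I_0}}$, and together with the trivial bound $\|M\ci{I_0}w\|\ci{L^1}\ge\|w\|\ci{L^1}$ one recovers a fixed multiple of $\int_0^1 w^{**}=B(N)$.

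The main obstacle is that last comparison: showing $\|w\|\ci{L^1}+\int_{\{w>\langle w\rangle\ci{I_0}\}}w\,\ln^+\!\tfrac{w}{\langle w\rangle\ci{I_0}}$ is comparable, with absolute constants, to $\int_0^1 w^{**}$ — equivalently, the lower half of the Herz equivalence. This is a purely one-dimensional rearrangement estimate: split $\int_0^1 w^*(r)\ln(1/r)\,\mathd r$ at the level $r_0$ where $w^*(r_0)=\langle w\rangle\ci{I_0}$, bound the tail $\int_{r_0}^1 w^*(r)\ln(1/r)\,\mathd r\le\langle w\rangle\ci{I_0}\int_0^1\ln(1/r)\,\mathd r=\|w\|\ci{L^1}$, and on $(0,r_0)$ trade $\ln(1/r)$ against $\ln\tfrac{w^*(r)}{\langle w\rangle\ci{I_0}}$ using the monotonicity inequalities $r\,w^*(r)\le\int_0^r w^*\le\langle w\rangle\ci{I_0}$ with an additive loss controlled by $\|w\|\ci{L^1}$. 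All of this is exactly the content of \cite[Theorem IV.6.7]{Bennett-Sharpley_1988}, so in the write-up Step two can simply be quoted and only the identity $B(N)=\int_0^1 w^{**}(u)\,\mathd u$ from the first paragraph needs to be verified.
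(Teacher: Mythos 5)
Your proof is correct and amounts to the same argument the paper intends: the paper offers no proof beyond the citation to \cite[Theorem~IV.6.7]{Bennett-Sharpley_1988}, and your two Tonelli computations showing $B(N)=\int_0^1 w^*(r)\ln(1/r)\,\mathd r=\int_0^1 w^{**}(u)\,\mathd u$ are precisely what is needed to make that reference apply. Once $B(N)$ is identified with the standard $L\log L$ functional $\int_0^1 w^{**}$, the one-dimensional Herz/Stein equivalence $(M_{I_0}w)^*(u)\asymp w^{**}(u)$ (with absolute constants, via the rising-sun lemma) finishes the lemma.
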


Let $N=N_0$ and $N_1$ be two distribution functions, and let $\sd N:=N_1 -N$. We want to compute the second derivative of the function $\theta \mapsto B(N+\theta\sd N)$. 

Let $N_\theta :=N + \theta\sd N$, and let 
\[
u_\theta := \int_0^\infty N_\theta(t) dt. 
\]
If we think of the function $N_\theta$ as of the distribution function of a function $w_\theta$ on, say, $[0,1]$, then $u_\theta$ is the average of the function $w_\theta$. 
Also, denote
\begin{align}
\label{delta-u}
\sd u := u_1-u_0 = \int_0^\infty \sd N(t) \dd t.
% \qquad u_\sd := \int_0^\infty | \sd N(t)| \dd t
\end{align}
Then we calculate
\[ 
\frac{\mathd^2}{\mathd \theta^2} B (N_{\theta}) = \frac{\mathd^2}{\mathd
   \theta^2} \int^{\infty}_0 \psi (N_{\theta} (t)) \mathd t = -
   \int^{\infty}_0 \frac{(\sd N (t))^2}{N_{\theta} (t)} \mathd t. 
\]
Using the Cauchy--Schwartz inequality we get, see \cite[Lemma 5.1]{TV-Entropy2016}, that
\[ 
- \frac{\mathd^2}{\mathd \theta^2} B (N_{\theta}) \geqslant \frac{\left(
   \int^{\infty}_0 \sd N (t) \mathd t \right)^2}{\int^{\infty}_0 N_{\theta}
   (t) \mathd t} = \frac{| \sd u |^2}{u_{\theta}} .  
\]
Then using the Taylor's formula we get, see \cite[Corollary 5.2]{TV-Entropy2016} 
\begin{lm}
\label{l:two term Bellman inequality}
Let $N_1$, $N_2$ and $N$ be distribution functions such that $N=(N_1+N_2)/2$ and $N=N(N_{1,2})<\infty$. Let $\sd N=N_1-N$ and $\sd u$ is defined by \eqref{delta-u}.  Then 
\begin{align}
\label{eq:two term Bellman inequality}
B(N)-\frac{B(N_1)+B(N_2)}{2} \ge \frac12\cdot \frac{(\sd u)^2}{u},  
\end{align}
where, recall $u=\int_0^\infty N(t) \dd t$. 
\end{lm}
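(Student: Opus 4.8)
The plan is to prove Lemma~\ref{l:two term Bellman inequality} as an immediate consequence of the second-derivative computation that precedes its statement, combined with Taylor's formula with integral remainder. First I would set $N_\theta = N + \theta\,\sd N$ for $\theta\in[-1,1]$, so that $N_0 = N = (N_1+N_2)/2$, $N_1 = N_\theta\big|_{\theta=1}$, and $N_2 = N_\theta\big|_{\theta=-1}$ (since $N_2 - N = -(N_1-N) = -\sd N$). The function $\varphi(\theta) := B(N_\theta)$ is well-defined and twice differentiable on a neighborhood of $[-1,1]$, with $\varphi''(\theta) = -\int_0^\infty (\sd N(t))^2 / N_\theta(t)\,\dd t$ as computed in the display just before the lemma, and with $\varphi''(\theta) \le -|\sd u|^2/u_\theta$ by the Cauchy--Schwarz bound already established. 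The quantity we want to bound from below is $B(N) - \tfrac12(B(N_1)+B(N_2)) = \varphi(0) - \tfrac12(\varphi(1)+\varphi(-1))$.

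Next I would apply Taylor's formula at $\theta = 0$: $\varphi(\pm 1) = \varphi(0) \pm \varphi'(0) + \int_0^{\pm 1}(\pm 1 - s)\varphi''(s)\,\dd s$. Averaging the $+1$ and $-1$ versions, the first-order terms cancel and we obtain
\[
\varphi(0) - \frac{\varphi(1)+\varphi(-1)}{2} = -\frac12\int_{-1}^{1}(1-|s|)\,\varphi''(s)\,\dd s = \frac12\int_{-1}^{1}(1-|s|)\,\bigl(-\varphi''(s)\bigr)\,\dd s.
\]
Now I would insert the lower bound $-\varphi''(s) \ge |\sd u|^2/u_s$ and observe that $u_s = \int_0^\infty N_s(t)\,\dd t = u + s\,\sd u$ is an affine (hence concave) function of $s$ on $[-1,1]$ with endpoint values $u_1$ and $u_{-1}$; in particular, since $u_s$ is nonnegative on $[-1,1]$ and concave, one has $u_s \le \max(u_1,u_{-1}) \le u_1 + u_{-1} = 2u$ (using $u = (u_1+u_{-1})/2$ and nonnegativity). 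Therefore $-\varphi''(s) \ge |\sd u|^2/(2u)$ uniformly on $[-1,1]$, and since $\int_{-1}^1 (1-|s|)\,\dd s = 1$, we conclude
\[
\varphi(0) - \frac{\varphi(1)+\varphi(-1)}{2} \ge \frac12\cdot\frac{|\sd u|^2}{2u}\cdot 1 = \frac{(\sd u)^2}{4u}.
\]

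I should note that the constant I obtain this way, $1/4$, is slightly weaker than the $1/2$ asserted in \eqref{eq:two term Bellman inequality}; to recover the sharp constant one argues more carefully, for instance by using concavity of $s\mapsto u_s$ to bound $1/u_s$ from below by the chord, i.e. $1/u_s \ge$ a suitable affine interpolation, or more simply by noting $u_s \le u + |s\,\sd u|$ and tracking the integral $\int_{-1}^1 (1-|s|)/(u+|s\,\sd u|)\,\dd s$ directly, or by invoking the exact reference \cite[Corollary 5.2]{TV-Entropy2016} whose statement this lemma reproduces. The anticipated main obstacle is precisely this bookkeeping of the optimal constant: the qualitative inequality with \emph{some} absolute constant is essentially forced by the computation already displayed, but pinning down the factor $1/2$ requires either a sharper estimate on the denominator $u_\theta$ along the segment or a cleaner substitution, and this is the only place where real care (as opposed to routine differentiation under the integral sign and Cauchy--Schwarz) is needed. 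Since the paper only uses this lemma qualitatively in the subsequent $n$-adic argument, I would present the clean Taylor-plus-Cauchy--Schwarz derivation and cite \cite{TV-Entropy2016} for the precise constant.
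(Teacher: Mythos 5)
Your approach is the same as the paper's: Taylor's formula applied to $\varphi(\theta)=B(N_\theta)$ using the second-derivative bound $-\varphi''(\theta)\ge |\sd u|^2/u_\theta$ computed just above the lemma. The paper itself gives no more detail, deferring to \cite[Corollary 5.2]{TV-Entropy2016}, so the structure of your argument matches exactly. You are also right that your crude bound $u_s\le 2u$ only yields the constant $1/4$. But the sharp constant $1/2$ is actually within easy reach from precisely the integral you wrote down, and your proposed fixes are not quite stated correctly: concavity of $s\mapsto u_s$ does not bound $1/u_s$ from below by a chord (a chord bounds a convex function from \emph{above}), and the bound $u_s\le u+|s\,\sd u|$ alone again only gives a weaker constant.

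The clean way to finish is to note that $u_s=u+s\,\sd u$ is positive and affine, hence $s\mapsto 1/u_s$ is convex on $[-1,1]$; since $(1-|s|)\,\dd s$ is a probability density on $[-1,1]$ with barycenter $0$, Jensen's inequality gives
\[
\int_{-1}^1 (1-|s|)\,\frac{\dd s}{u_s}\ \ge\ \frac{1}{u_0}=\frac1u,
\]
and inserting this into your Taylor identity yields exactly
\[
\varphi(0)-\frac{\varphi(1)+\varphi(-1)}{2}\ \ge\ \frac{|\sd u|^2}{2}\int_{-1}^{1}\frac{1-|s|}{u_s}\,\dd s\ \ge\ \frac{|\sd u|^2}{2u}.
\]
Equivalently, pair $s$ with $-s$ and use $\frac{1}{u+s\,\sd u}+\frac{1}{u-s\,\sd u}\ge \frac{2}{u}$. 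Either way, no loss of constant occurs, so there is no need to fall back on the external reference for the factor $1/2$.
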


Using this lemma one can easily get the result for the dyadic filtration. To get it for the $n$-adic filtration some extra work is needed. 

\begin{definition}
\label{df:Haar}
Recall that a Haar function on an interval $I\in\cD$  is a function $h=h\ci I$ supported on $I$, constant on children of $I$ and such that $\int_I h\ci I \dd x=0$. 

A Haar function $h\ci I$ is called elementary if it is non-zero on at most 2 children of $I$. Thus any elementary Haar function $h\ci I$ can be represented as $h\ci{I}= c\ci I \left( \1\ci{I_{k_1}} - \1\ci{I_{k_2}}\right)$, $I_{k_1}, I_{k_2}\in\ch I$. 
\end{definition}

\begin{lm}
\label{l: elementary Haar decomposition}
Let $\cD$ be an $n$ adic filtration. 
Any Haar function $h$ on an interval $I\in \cD$ can be represented as a sum of at most $n$ elementary Haar functions $h_k$, and moreover
\begin{align}
\label{eq: sum |h_k|}
|h|=\sum_k |h_k|
\end{align}
\end{lm}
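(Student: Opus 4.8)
The plan is to prove Lemma \ref{l: elementary Haar decomposition} by a direct, greedy decomposition of the Haar function $h=h\ci I$ on the $n$-adic interval $I$. Write $I_1,\dots,I_n$ for the $n$ children of $I$ (all of equal Lebesgue measure, hence equal $\nu$-measure $|I|/n$), and let $a_j$ be the constant value of $h$ on $I_j$. Since $\int_I h\,\dd x=0$ we have $\sum_{j=1}^n a_j=0$. The key observation is that $h$ is determined by the vector $(a_1,\dots,a_n)$ lying in the hyperplane $\{\sum a_j=0\}$, and an elementary Haar function corresponds to a vector supported on two coordinates with opposite signs, i.e.\ a multiple of $e_{k_1}-e_{k_2}$. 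So the statement reduces to: any vector $a$ with $\sum_j a_j=0$ can be written as a sum of at most $n$ vectors of the form $c(e_{k_1}-e_{k_2})$ with $c\ge0$, in such a way that the sum is \emph{sign-coherent} coordinatewise, which is exactly what \eqref{eq: sum |h_k|} encodes (no cancellation: $|h|=\sum_k|h_k|$ pointwise).

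The construction I would use is the standard ``transportation''/Hall-type greedy algorithm. Split the index set into $P=\{j: a_j>0\}$ and $M=\{j: a_j<0\}$ (ignoring zeros). Pick $p\in P$ and $m\in M$, set $c=\min(a_p,-a_m)$, and let $h_1=c(\1\ci{I_p}-\1\ci{I_m})$; then replace $a_p\mapsto a_p-c$ and $a_m\mapsto a_m+c$. At least one of these becomes zero, so each step removes at least one nonzero coordinate from the active set. Since $|P|+|M|\le n$ and the final step removes two coordinates at once (the last positive and last negative balance exactly, because the running sum stays zero), the number of steps is at most $|P|+|M|-1\le n-1\le n$. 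By construction every $h_k$ is an elementary Haar function on $I$, and because we only ever subtract mass from a positive coordinate and add to a negative one — never changing any sign — all the $h_k$ are sign-coherent with the partial remainders, giving $h=\sum_k h_k$ and $|h(x)|=\sum_k|h_k(x)|$ for a.e.\ $x$. I would phrase the sign-coherence claim as: on each child $I_j$, all nonzero values $h_k(x)$ share the sign of $a_j$, which is immediate from the invariant that $a_p\ge0$ for $p\in P$ and $a_m\le0$ for $m\in M$ is preserved throughout.

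The only genuinely delicate point is the bookkeeping that gives exactly $n$ (rather than, say, $n-1$ being off, or a naive bound of $2n$): one must observe that the sum $\sum_j (\text{remaining }a_j)$ is invariantly zero, so at the very end the last positive coordinate and the last negative coordinate have values summing to zero, hence are eliminated simultaneously in a single step. This is what keeps the count at $|P|+|M|-1\le n-1$. Everything else is routine linear bookkeeping. I do not expect any real obstacle; the main thing to get right in writing it up is stating the loop invariant (partial remainders sum to zero, signs preserved) cleanly enough that both $h=\sum h_k$ and \eqref{eq: sum |h_k|} fall out at once, rather than proving them separately.
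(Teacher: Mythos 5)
Your greedy decomposition is the same algorithm the paper uses: the paper sets it up as an induction (on the number of children on which $h$ is supported), at each step peeling off one elementary Haar function $\mu_1\bigl(\1\ci{I_{k_1}}-\1\ci{I_{k_2}}\bigr)$ with $\mu_1=\min(|\eta_{k_1}|,|\eta_{k_2}|)$ where $\eta_{k_1}>0>\eta_{k_2}$, which is exactly your greedy step, and the sign-coherence giving $|h|=|h_1|+|h^1|$ is the same observation. Your proposal is correct, and your bookkeeping even gives the slightly sharper count $|P|+|M|-1\le n-1$, though the stated bound of $n$ is all that is needed.
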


\begin{proof}
We prove it using induction in $n$. The case $n=2$ is trivial. 

Suppose the lemma is proved for $n-1$. Let $I_k$ be the children of $I$. We write $h$ as 
\[
h=\sum_{k=1}^n \eta_k \1\ci{I_k} . 
\]
Since $\int_h \dd x =0$ there exist $k_1$, $k_2$ such that $\eta_{k_1}>0$, $\eta_{k_2}<0$. 

For ${\mu}_1
\assign \min (| \eta_{k_1} |, | \eta_{k_2} |)$ define 
\[
h_1:= \mu_1 \left( \1\ci{I_{k_1}} -\1\ci{I_{k_2}}\right), \qquad h^1 := h-h_1.  
\]
Clearly, $h_1$ is an elementary Haar function, $h$ is a Haar function and 
\begin{align}
\label{eq: |h_1|+|h^1|}
|h| = |h_1| + |h^1|. 
\end{align}
Note, that $h^1$ is supported on at most $n-1$ intervals. Applying the induction hypothesis we get the decomposition $h=\sum_k h_k$. Identity \eqref{eq: sum |h_k|} follows  from \eqref{eq: |h_1|+|h^1|}. 
\end{proof}

\subsection{Proof of Theorem \ref{thm: Ainfinity n adic}}
We need to estimate the left hand side of \eqref{eq: modified testing}, i.e.~the sum
\begin{equation}
\label{eq: modified testing 02}
  \frac{1}{| I_0 |} \sum\ci {I \in\cD( I_0)} \frac{(w, h\ci I)\ci {L^2}^2}{\|h\ci I\|\ci{L^2(w)}^2}  
\end{equation}
Recall that for an interval $I\in\cD\ci I$, we note $N\ci I^w$ the distribution function \eqref{eq:NormDistrFn}. We want to show that 
\begin{align}
\label{eq:main Bellman inequality}
|I| B(N\ci I) - \sum_{I_k\in\ch I} |I_k| B(N\ci{I_k}) \ge 
\frac{2}{ n^2} \frac{(w, h_{I})_{L^2}^2}{\| h_{I} \|^2_{L^2
   (w)}}
\end{align}
Then summing over all $I\in\cD(I_0)$ and taking into account that $B(N\ci I)\ge 0$ we get that
\begin{align*}
\sum_{I\in\cD(I_0)} \frac{(w, h_{I})_{L^2}^2}{\| h_{I} \|^2_{L^2
   (w)}} \le \frac{n^2}{2} B(N\ci{I_0})  \lesssim n^2 \| M\ci{I_0} w\|\ci{L^1(I_0)} ;
\end{align*}
the last inequality here follows from Lemma \ref{l:LlogL}. By the definition of $A_\infty$
\[
\| M\ci{I_0} w\|\ci{L^1(I_0)} \le [w]\ci{\infty, \tmop{cl}} \langle w\rangle\ci{I_0} |I_0| 
=[w]\ci{\infty, \tmop{cl}} \langle w\rangle\ci{I_0} ,
\]
so the theorem is proved modulo the main inequality \eqref{eq:main Bellman inequality}.

To proof \eqref{eq:main Bellman inequality} let us decompose the Haar function $h\ci I$ into the sum of elementary Haar functions $h\ci{I,k}$, $h=\sum_k h\ci{I,k}$, see Lemma \ref{l: elementary Haar decomposition}. 

It follows from \eqref{eq: sum |h_k|} that 
\begin{align}
\label{eq:norm h_1 le norm h}
 \| h\ci{I, k}
   \|\ci{L^2 (w)} \le \| h\ci I \|\ci{L^2 (w)} . 
\end{align}
Certainly
\[ 
(w, h\ci I)\ci {L^2} = \sum^n_{k = 1} (w, h\ci {I, k})\ci {L^2}, 
\]
so there exists a $k$ so that
\begin{align}
\label{eq: (w,h_k)}
| (w, h\ci {I, k})\ci {L^2} | \geqslant \frac{1}{n} | (w, h\ci I)\ci {L^2} | . 
\end{align}
Without loss of generality (by rearranging the intervals, if necessary) we can assume that this $k = 1$ and that the elementary Haar function $h\ci {I, 1}$ is a dyadic Haar function supported on the
first two $n$-adic subintervals $I_1$ and $I_2$ of $I$. 

Denote $I^1=I_1\cup I_2$. Then
\begin{align*}
N\ci I = \frac{2}{n} N\ci{I^1} + \frac1n \sum_{k=1}^n N\ci{I_k}, \qquad \text{and}\qquad N\ci{I^1} = \frac12 \left( N\ci{I_1} +N\ci{I_2} \right)
\end{align*}
By concavity of $B$ we get
\[ 
| I | B (N\ci {I}) \geqslant \sum^n_{k = 3} \frac{| I|}{n} B (N_k) +
   \frac{2}{n} | I | B \left( \frac{N_1 + N_2}{2} \right) . 
 \]
 
Note that for the elementary Haar function $h\ci{I,1}$
\begin{align*}
\frac{(w, h\ci{I,
   1})\ci{L^2}^2}{\| h\ci{I, 1} \|^2\ci{L^2 (w)}} = \frac{\left( \langle w\rangle\ci{I_1} - 
   \langle w\rangle\ci{I_2} \right)^2}{\langle w\rangle\ci{I^1} }|I^1| = 
   4\frac{\left( \langle w\rangle\ci{I_1} - 
   \langle w\rangle\ci{I^1} \right)^2}{\langle w\rangle\ci{I^1} } |I^1|
\end{align*}
Then applying Lemma \ref{l:two term Bellman inequality} and noticing that $\sd u$ in \eqref{eq:two term Bellman inequality} us exactly $\langle w\rangle\ci{I_1} - 
   \langle w\rangle\ci{I^1}$ we get 
\begin{align*}
| {I^1} | B \left( \frac{N_1 + N_2}{2} \right) & \geqslant 
\frac{|
  {I^1} |}{2} (B (N_1) + B (N_2)) + 2 \frac{(w, h\ci{I,
   1})\ci{L^2}^2}{\| h\ci{I, 1} \|^2_{L^2 (w)}}  &\qquad & \text{by \eqref{eq:two term Bellman inequality}}
\\
& \geqslant \frac{| {I^1}
   |}{2} (B (N_1) + B (N_2)) + \frac{2}{n^2} \frac{(w, h_{I})_{L^2}^2}{\|
   h_{I} \|^2_{L^2 (w)}}  && \text{by \eqref{eq:norm h_1 le norm h} and \eqref{eq: (w,h_k)}}. 
\end{align*}
The main inequality \eqref{eq:main Bellman inequality}, and so the theorem is proved.

%\subsection{Remarks}
\subsection{Some remarks}

It is a remarkable result of \cite{ChWiWo1985} that for any $Q \subset \mathbb{R}^{n}$ we have superexponential bound 
\begin{align}\label{distb}
\frac{1}{|Q|}\left|  \left \{ x  \in Q\; : \; f(x) - \langle f \rangle_{Q} \geq  \lambda  \right\} \right| \leq  e^{-\lambda^{2}/(2 \|{S}_\infty f\|_{\infty}^{2})}
\end{align}
for any $\lambda \geq 0$ and any $f$ with $\|{S}_\infty f\|_{\infty}<\infty$, where the square function ${S}_\infty$ is defined as follows 
\begin{align*}
{S}_\infty f = \left( \sum_{I \in \cD(Q)} \|\Delta\ci{I} f \|_{\infty}^{2} \1\ci{I}\right)^{1/2}. 
\end{align*}
The superexponential estimate allowed Wilson~\cite{Wilson1989-2} %(see the first lemma)  
to obtain weighted $L^{p}$ estimates for the square function in terms of the maximal function, namely for any $0<p<\infty$ we have 
\begin{align}\label{wilson}
\int |M\ci\cD f|^{p} w\dd x \underset{n,p}{\lesssim}  [w]^{p/2}_{\infty} \int ({S}_\infty f)^{p} w \dd x 
\end{align}

For the standard dyadic filtration $S_\infty$ coincides with our square function $S$, so the result of Wilson (for $p=2$) gives for the standard dyadic filtration the statement of Theorem \ref{thm:
Ainfinity n adic}. However, this approach does not give  Theorem \ref{thm:
Ainfinity n adic} for $n$-adic filtration with $n\ge 3$, because the superexponential estimate  \eqref{distb} should be first proved for our square function $S$. And the square function $S_\infty$ is significantly larger than $S$: one can easily construct an example of a function with  $\|Sf\|_\infty\le 1$ and unbounded $S_\infty f$. So Theorem \ref{thm: Ainfinity n adic} is a new result. 

We should mention that it is possible using some ideas from the proof of Theorem \ref{thm:
Ainfinity n adic} to prove the estimate \eqref{distb} for our square function $S$. The reasoning from \cite{Wilson1989-2} then allows us to get the estimate \eqref{wilson} for our square function, but this will be a subject of a separate paper.

\section{Upper bound for the square function}
\label{S: upper bound for the square function}

In this section we sketch a proof of the harder estimate \eqref{eq: upper bound harder} in Theorem \ref{t:UpperBd}; the easier estimate \eqref{eq: upper bound easy} was proved earlier in Section \ref{s: trivial estimates}.

Trivial reasoning shows that it is sufficient to prove the estimate for an atomic filtration on $I_0=[0,1]$.  

The proof is based on the sparse domination of the square function. 

Recall that a collection $\cS\subset\cD$ is called \emph{sparse} if for any $J\in\cS$
\begin{align*}
\sum_{I\in \ch\ci\cS J} |I| \le |J|/2. 
\end{align*}
Given a sparse family $\cS$ the \emph{sparse square function} $S\ci\cS$ is defined as 
\begin{align*}
S\ci\cS f(x) := \biggl( \sum_{I\in \cS} \langle |f| \rangle^2\ci I \1\ci I(x) \biggr)^{1/2}
\end{align*}
\begin{lm}
\label{l: sparse square function}
Let $f\in L^1(I_0)$. There exist a sparse collection $\cS\subset \cD$ (depending on $f$) such that 
\begin{align*}
Sf(x) \lesssim S\ci \cS f(x) \qquad \text{a.e.}
\end{align*}
\end{lm}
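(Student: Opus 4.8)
The plan is to construct the sparse family $\cS$ by a standard stopping-time (Calderón--Zygmund-type) argument applied to the averages of $|f|$, and then to control the pointwise value of $Sf$ on each stopping cube by the constant average $\langle|f|\rangle\ci J$ of the ``current'' stopping cube $J$. First I would set up the stopping times: let $I_0\in\cS$, and having selected a stopping cube $J\in\cS$, declare its $\cS$-children to be the maximal $I\in\cD(J)$, $I\subsetneq J$, with $\langle|f|\rangle\ci I > 2\langle|f|\rangle\ci J$ (or with some fixed constant in place of $2$; the precise constant only affects the absolute constants). The maximality and the martingale (Lebesgue-differentiation) structure give $\sum_{I\in\ch\ci\cS J}|I|\le \tfrac12|J|$, so $\cS$ is sparse; this step is routine.

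Next I would localize the square function to the sparse cells. Fix a point $x$ and let $J=J(x)$ be the minimal cube of $\cS$ containing $x$ (if there is no minimal one the bound follows by a limiting argument over finite subfiltrations, exactly as in the reductions of Section~4). Split the sum defining $(Sf(x))^2=\sum_I(\Delta\ci I f(x))^2$ according to whether $I\in\cD(J(x))$ or $I$ is a proper ancestor of $J(x)$ that still sits inside some stopping cube whose $\cS$-child contains $x$. For the ``inside $J(x)$'' part, on every such $I$ one has $\langle|f|\rangle\ci I\le 2\langle|f|\rangle\ci{J(x)}$ by the stopping rule, hence $|\Delta\ci I f(x)|\le |\langle f\rangle\ci{I'}|+|\langle f\rangle\ci I|\lesssim \langle|f|\rangle\ci{J(x)}$ for the relevant child $I'\ni x$; summing in $I$ and using that the $\Delta\ci I f$ telescope (so only $O(1)$ effective ``scales worth'' of size contribute after grouping) gives a bound $\lesssim \langle|f|\rangle\ci{J(x)}\1\ci{J(x)}(x)\le S\ci\cS f(x)$. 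For the ``ancestor'' part, the contribution of the martingale differences $\Delta\ci I f$ with $I\supsetneq J(x)$ at the point $x$ is exactly $\langle f\rangle\ci{J_{\min}}-\langle f\rangle\ci{I_0}$ telescoped through the chain of $\cS$-ancestors $J_0=I_0\supsetneq J_1\supsetneq\cdots\supsetneq J_m=J(x)$ of $x$, and across one $\cS$-generation the jump is controlled by $\langle|f|\rangle\ci{J_{j}}$; since $\langle|f|\rangle\ci{J_{j}}$ grows geometrically down the chain (by the stopping condition $\langle|f|\rangle\ci{J_{j+1}}>2\langle|f|\rangle\ci{J_j}$), these jumps sum up, via the triangle inequality in $\ell^2$ and summation of a geometric series, to $\lesssim \bigl(\sum_{j}\langle|f|\rangle\ci{J_j}^2\bigr)^{1/2}=S\ci\cS f(x)$.

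Combining the two parts and taking square roots yields $Sf(x)\lesssim S\ci\cS f(x)$ a.e., which is the claim. The main obstacle I anticipate is the bookkeeping in the ``within a stopping cell'' estimate: a single stopping cell $J$ can contain arbitrarily many $n$-adic generations before the next stopping cube, so one cannot naively bound $(Sf)^2$ on $J$ by ``number of scales times $\langle|f|\rangle\ci J^2$''. The resolution is to note that the martingale differences $\Delta\ci I f$ restricted to $\cD(J)\setminus(\text{stopping descendants})$ are orthogonal and their partial sums are the conditional expectations $\E_k f$, all of which are pointwise bounded by $2\langle|f|\rangle\ci J$ (again by the stopping rule and a maximal-function argument); hence $\sum_{I}(\Delta\ci I f(x))^2$ over that range telescopes and is controlled by the square of the oscillation of a bounded martingale, i.e.\ $\lesssim\langle|f|\rangle\ci J^2$, uniformly in the number of scales. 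Handling this telescoping cleanly — and making sure the boundary cubes (the $\cS$-children of $J$) are attributed to the right cell — is the one genuinely delicate point; everything else is the standard sparse-domination template.
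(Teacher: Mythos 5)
There is a genuine gap, and it is precisely the one you flag as the ``delicate point''. Your stopping rule selects only on the size of $\langle|f|\rangle\ci I$, and you then try to close the argument by claiming that on a single stopping cell $J$ the bounded martingale $\E_k f$ has pointwise square function $\lesssim\langle|f|\rangle\ci J^2$. That claim is false: a uniformly bounded dyadic martingale does \emph{not} have pointwise bounded square function. For instance, take the martingale on $[0,1]$ with $\Delta\ci I f=\pm h\ci I$ (signs alternating, $\|h\ci I\|_\infty=1$) only along the chain $I=[0,2^{-k}]$, $k\ge 0$; all partial sums stay in $[-1,1]$, yet for $x\in(2^{-m-1},2^{-m}]$ one has $Sf(x)^2=m+1\to\infty$. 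Orthogonality of martingale differences gives you an $L^2$ bound, not a pointwise one, and ``telescoping'' only controls the linear sum $\sum_I\Delta\ci I f(x)$, never the $\ell^2$ sum $\sum_I(\Delta\ci I f(x))^2$. The same defect infects your ``ancestor'' estimate: the contribution across one $\cS$-generation is a partial square function, not a single jump, and it is not controlled by $\langle|f|\rangle\ci{J_j}$.

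The paper's argument avoids this by building the stopping time from \emph{two} conditions simultaneously: the average $\langle|f|\rangle\ci I$ must stay $\le C\langle|f|\rangle\ci{J}$ \emph{and} the accumulated square function $\sum_{K: I\subsetneqq K\subseteq J}|\Delta\ci K f(x)|^2$ must stay $\le C^2\langle|f|\rangle\ci J^2$. Sparseness of the resulting family follows from the weak type $(1,1)$ of the martingale maximal function \emph{together with} the weak type $(1,1)$ of the square function itself (Burkholder), with the constant $C$ chosen so that the union of the two bad sets occupies at most half of $J$. This second stopping condition is exactly what your argument is missing, and it cannot be derived from the $M$-stopping rule alone. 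With that repair the rest of your outline (iterating over $\cS$-generations and splitting the sum into ``inside the current cell'' plus the stopping contributions) does go through, essentially as in the paper.
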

\begin{proof}
The construction is pretty standard, we just outline it.

It is well known that the operator $S$ has weak type $1$-$1$, see \cite{Burk_MartTr1966}. The maximal function $M^\cD$ also has weak type $1$-$1$,  so there exists constant $C$ such that 
\begin{align}
\label{eq: square function weak type}
\left| \{x\in J: S\ci J f(x) >C \}\bigcup \{x\in J: M\ci J^\cD f(x) >C \}\right| \le |J|/2;
\end{align}
here $S\ci J$ is the \emph{localized} square function 
\begin{align*}
S\ci J f (x) := \biggl( \sum_{I\in\cD(J)} |\Delta\ci I f(x)|^2 \biggr)^{1/2}. 
\end{align*}

We start from the interval $I_0$. We define the stopping intervals $I\in \cS_1(I_0)$ to be the maximal (by inclusion) intervals $I\in\cD(I_0)$ such that either 
\begin{align*}
\langle |f| \rangle\ci I > C \langle |f|\rangle\ci{I_0} \qquad \text{or} \qquad \sum_{J\in\cD(I_0): I\subsetneqq J} |\Delta\ci J f (x)|^2 > C^2 \langle |f|\rangle\ci{I_0}^2\,;
\end{align*}
here $C$ is from \eqref{eq: square function weak type} and clearly $S=S\ci{I_0}$. 

By \eqref{eq: square function weak type} we have $\sum_{I\in\cS_1(I_0)}|I|\le |I_0|/2$, and 
\begin{align*}
Sf(x)^2 \le 3 C^2  \langle |f|\rangle\ci{I_0}^2 \1\ci{I_0} + 2C^2\sum_{I\in\cS_1(I_0)} \langle |f|\rangle\ci I \1\ci I + \sum_{I\in\cS_1(I_0)} S\ci I f(x)^2.
\end{align*}
Repeating this procedure for stopping intervals $I\in\cS_1(I_0)$ and iterating, we get the conclusion of the lemma.  
\end{proof}

\begin{proof}[Proof of estimate \eqref{eq: upper bound harder}]
It is sufficient to show that for a sparse family $\cS$
\begin{align*}
\|S\ci\cS f\|\ci{L^2(w)} \lesssim [w]\ci{2,\cD}^{1/2} [w^{-1}]\ci{\infty,\cD}^{1/2} \|f\|\ci{L^2(w)}
\end{align*}
Denoting $g=wf$, so $f=w^{-1} g$ we can rewrite this estimate as 
\begin{align}
\label{eq: sparse sq function estimate 02}
\|S\ci\cS (gw^{-1})\|\ci{L^2(w)} \lesssim [w]\ci{2,\cD}^{1/2} [w^{-1}]\ci{\infty,\cD}^{1/2} \|g\|\ci{L^2(w^{-1})}
\end{align}
So, we need to estimate 
\begin{align}
\label{eq: sparse sq function estimate 03}
\sum_{I\in\cS} \langle |g|w^{-1}\rangle\ci I^2 \langle w \rangle\ci I |I|
\end{align}
(the left hand side in \eqref{eq: sparse sq function estimate 02} squared). But as we already discussed above in Section \ref{s:better lower bound}, the martingale Carleson Embedding Theorem implies that it is sufficient to estimate \eqref{eq: sparse sq function estimate 03} on  functions $g=\1\ci J$, $J\in\cD$. Namely, if for all $J\in\cD$
\begin{align*}
\sum_{I\in\cS:I\subset J} \langle w^{-1}\rangle\ci I^2 \langle w \rangle\ci I |I| \le C \langle w^{-1}\rangle\ci J |J|
\end{align*}
then for all $g\in L^2(w^{-1})$, the sum \eqref{eq: sparse sq function estimate 03} is bounded by $4C \|g\|\ci{L^2(w^{-1})}^2$. 

Estimating we get
\begin{align*}
\sum_{I\in\cS:I\subset J} \langle w^{-1}\rangle\ci I^2 \langle w \rangle\ci I |I| & \le 
[w]\ci{2,\cD} \sum_{I\in\cS:I\subset J}  \langle w^{-1}\rangle\ci I |I|
\\
& \le  [w]\ci{2,\cD}  \| M\ci J (w^{-1}) \|\ci{L^1} 
\\
& \le   [w]\ci{2,\cD}  [w^{-1}]\ci{\infty,\cD}.
\end{align*}

\end{proof}

\def\cprime{$'$}
  \def\lfhook#1{\setbox0=\hbox{#1}{\ooalign{\hidewidth\lower1.5ex\hbox{'}\hidewidth\crcr\unhbox0}}}
\providecommand{\bysame}{\leavevmode\hbox to3em{\hrulefill}\thinspace}
\providecommand{\MR}{\relax\ifhmode\unskip\space\fi MR }
% \MRhref is called by the amsart/book/proc definition of \MR.
\providecommand{\MRhref}[2]{%
  \href{http://www.ams.org/mathscinet-getitem?mr=#1}{#2}
}
\providecommand{\href}[2]{#2}


\begin{thebibliography}{1}

\bibitem{Bennett-Sharpley_1988}
C.~Bennett and R.~Sharpley, \emph{Interpolation of operators}, Pure and
  Applied Mathematics, vol. 129, Academic Press Inc., Boston, MA, 1988.
  %\MR{928802 (89e:46001)}
  
  
\bibitem{Bonami_Mart_A2_1979}
A.~Bonami and D.~L{\'e}pingle, \emph{Fonction maximale et variation quadratique
  des martingales en pr\'esence d'un poids}, S\'eminaire de {P}robabilit\'es,
  {XIII} ({U}niv. {S}trasbourg, {S}trasbourg, 1977/78), Lecture Notes in Math.,
  vol. 721, Springer, Berlin, 1979, pp.~294--306. 
  %\MR{544802 (80i:60067)}

\bibitem{Burk_MartTr1966}
D.~L.~Burkholder, \emph{Martingale transforms}, Ann. Math. Statist. \textbf{37}
  (1966), 1494--1504. 
  %\MR{0208647}
  
\bibitem{ChWiWo1985}
S.-Y.~A.~Chang, J.~M.~Wilson, and T.~H.~Wolff, \emph{Some weighted norm
  inequalities concerning the {S}chr\"odinger operators}, Comment. Math. Helv.
  \textbf{60} (1985), no.~2, 217--246. 
  %\MR{800004}

\bibitem{HuTrVo_SquareFn_2000}
S.~Hukovic, S.~Treil, and A.~Volberg, \emph{The {B}ellman functions and sharp
  weighted inequalities for square functions}, Complex analysis, operators, and
  related topics, Oper. Theory Adv. Appl., vol. 113, Birkh\"auser, Basel, 2000,
  pp.~97--113. 
  %\MR{1771755}
  

\bibitem{GundyWheeden1973}
{R.~F.~Gundy and R.~L.~Wheeden,} \emph{Weighted integral inequalities for the
  nontangential maximal function, {L}usin area integral, and {W}alsh-{P}aley
  series}, Studia Math. {49} (1973/74), 107--124.



\bibitem{IOSVZ}
P.~Ivanisvili, N.~N.~Osipov, D.~M.~Stolyarov, V.~V.~Vasyunin, P.~B.~Zatitskiy, \emph{Bellman function for extremal problems in BMO}, Trans. Amer. Math. Soc. \textbf{368} (2016), 3415--3468 
  %\MR{3625108}
  
   
\bibitem{La2017-SimpleA2}
M.~T.~Lacey, \emph{An elementary proof of the {$A_2$} bound}, Israel J.
  Math. \textbf{217} (2017), no.~1, 181--195. 
  %\MR{3625108}
  
\bibitem{LaceyLi2016}
M.~T.~Lacey and K.~Li, \emph{On {$A_p$}--{$A_\infty$} type estimates
  for square functions}, Math. Z. \textbf{284} (2016), no.~3-4, 1211--1222.
  %\MR{3563275}
  
\bibitem{NPetTrV2017}
F.~{Nazarov}, S.~{Petermichl}, S.~{Treil}, and A.~{Volberg}, \emph{{Convex body
  domination and weighted estimates with matrix weights}}, arXiv:1701.01907
  [math.CA] (2017), 22pp.

\bibitem{PetPott_2000}
S.~Petermichl and S.~Pott, \emph{An estimate for weighted {H}ilbert transform
  via square functions}, Trans. Amer. Math. Soc. \textbf{354} (2002), no.~4,
  1699--1703. 
 %\MR{1873024}
  

\bibitem{ThTrVo-A2}
C.~Thiele, S.~Treil, and A.~Volberg, \emph{Weighted
  martingale multipliers in the non-homogeneous setting and outer measure
  spaces}, Adv. Math. \textbf{285} (2015), 1155--1188. 
  %\MR{3406523}


\bibitem{TV-Entropy2016}
S.~Treil and A.~Volberg, \emph{Entropy conditions in two weight
  inequalities for singular integral operators}, Adv. Math. \textbf{301}
  (2016), 499--548. 
  %\MR{3539383}



\bibitem{GWang}
G.~Wang, \emph{Sharp square-function inequalities for conditionally symmetric martingales}, Trans. Amer. Math. Soc. \textbf{328} (1991), no.~1, 393--419.

\bibitem{GunWhe}
R.~F.~Gundy and R.~L.~Wheeden, \emph{Weighted Integral Inequalities for the Nontangential Maximal function, Lusin area integral and Walsh Paley series}, Studia Math. \textbf{49}, (1973/74), 107--124.


\bibitem{Wilson1989}
J.~M.~Wilson, \emph{Weighted norm inequalities for the continuous square
  function}, Trans. Amer. Math. Soc. \textbf{314} (1989), no.~2, 661--692.
 % \MR{972707}
 
 \bibitem{Wilson1989-2}
J.~M.~Wilson, \emph{$L^{p}$ weighted norm inequalities for the square function, $0<p<2$}, Illinois J. Math. \textbf{33} (1989), Iss.~3, 361--366.
 % \MR{972707}

\bibitem{Wittwer2000}
J.~Wittwer, \emph{A sharp estimate on the norm of the martingale
  transform}, Math. Res. Lett. \textbf{7} (2000), no.~1, 1--12. 
  %\MR{1748283}

\end{thebibliography}
\end{document}